\documentclass[reqno,10pt,centertags,draft]{amsart}
\usepackage{amsmath,amsthm,amscd,amssymb,latexsym,upref,stmaryrd}



\newcommand{\bbC}{{\mathbb{C}}}

\newcommand{\bbN}{{\mathbb{N}}}

\newcommand{\bbR}{{\mathbb{R}}}

\newcommand{\bfS}{{\mathbf{S}}}
\newcommand{\bfT}{{\mathbf{T}}}

\newcommand{\bsA}{{\boldsymbol{A}}}
\newcommand{\bsB}{{\boldsymbol{B}}}

\newcommand{\bsP}{{\boldsymbol{P}}}

\newcommand{\bsS}{{\boldsymbol{S}}}
\newcommand{\bsT}{{\boldsymbol{T}}}

\newcommand{\cB}{{\mathcal B}}

\newcommand{\cD}{{\mathcal D}}
\newcommand{\cE}{{\mathcal E}}

\newcommand{\cH}{{\mathcal H}}

\newcommand{\cM}{{\mathcal M}}

\newcommand{\cS}{{\mathcal S}}
\newcommand{\cT}{{\mathcal T}}

\newcommand{\cY}{{\mathcal Y}}

\newcommand{\gE}{\mathfrak{E}}




\DeclareMathOperator{\dom}{dom}
\DeclareMathOperator{\codim}{codim}

\renewcommand{\Im}{\text{\rm Im}}

\newcommand{\beq}{\begin{equation}}
\newcommand{\enq}{\end{equation}}

\newcommand{\no}{\notag}
\newcommand{\lb}{\label}
\newcommand{\f}{\frac}

\newcommand{\ol}{\overline}

\newcommand{\wti}{\widetilde}

\newcommand{\hatt}{\widehat}
\newcommand{\bi}{\bibitem}

\let\geq\geqslant
\let\leq\leqslant



\numberwithin{equation}{section}


\allowdisplaybreaks \numberwithin{equation}{section}

\newtheorem{theorem}{Theorem}[section]

\newtheorem{lemma}[theorem]{Lemma}
\newtheorem{corollary}[theorem]{Corollary}
\newtheorem{hypothesis}[theorem]{Hypothesis}
\newtheorem{definition}[theorem]{Definition}

\newtheorem{example}[theorem]{Example}
\theoremstyle{remark}
\newtheorem{remark}[theorem]{Remark}


\begin{document}

\numberwithin{equation}{section}
\allowdisplaybreaks

\title[On a Question of A.\ E.\ Nussbaum]{On a Question of A.\ E.\ Nussbaum on
Measurability of Families of Closed Linear Operators in a Hilbert Space}

\author[F.\ Gesztesy]{Fritz Gesztesy}
\address{Department of Mathematics,
University of Missouri, Columbia, MO 65211, USA}
\email{gesztesyf@missouri.edu}
\urladdr{http://www.math.missouri.edu/personnel/faculty/gesztesyf.html}

\author[A.\ Gomilko]{Alexander Gomilko}
\address{Faculty of Mathematics and Computer Science, Nicholas
Copernicus University, ul.
Chopina 12/18, 87-100 Torun, Poland, and Institute of Mathematics, Polish Academy of Sciences. \'Sniadeckich str. 8, 00-956 Warsaw, Poland}
\email{gomilko@mat.uni.torun.pl}

\author[F.\ Sukochev]{Fedor Sukochev}
\address{School of Mathematics and Statistics, UNSW, Kensington, NSW 2052,
Australia}
\email{f.sukochev@unsw.edu.au}

\author[Y.\ Tomilov]{Yuri Tomilov}
\address{Faculty of Mathematics and Computer Science, Nicholas
Copernicus University, ul.
Chopina 12/18, 87-100 Torun, Poland, and Institute of Mathematics, Polish Academy of Sciences. \'Sniadeckich str. 8, 00-956 Warsaw, Poland}
\email{tomilov@mat.uni.torun.pl}

\dedicatory{Dedicated to the memory of A.\ E.\ Nussbaum (1925--2009)}
\thanks{A.G. and Y.T. were partially supported by the  
Marie Curie ''Transfer of Knowledge'' programme, project ``TODEQ''.  
Y.T. was also partially supported by a MNiSzW grant Nr.\ N201384834. The work 
of F.S. was partially supported by the ARC.}

\date{\today}
\subjclass[2000]{Primary 46E40, 47A05, 47B40; Secondary 47B80, 46C05.}
\keywords{Closed linear operators, spectral operators, direct integrals.}

\begin{abstract} 
The purpose of this note is to answer a question A.\ E.\ Nussbaum 
formulated in 1964 about the possible equivalence between weak measurability of 
a family of densely defined, closed operators $\{T(t)\}_{t\in\bbR}$ in a 
separable complex Hilbert space $\cH$ on one hand, and the notion of 
measurability of the 
$2 \times 2$ operator-valued matrix of projections 
$\big\{\big(P(\Gamma(T(t)))_{j,k}\big)_{1\leq j,k \leq 2}\big\}_{t\in\bbR}$ 
onto the graph $\Gamma(T(t))$ of $T(t)$ on the other, in the negative. 

We also consider related questions pertaining to the family of adjoint 
operators $\{T(t)^*\}_{t\in\bbR}$ and to the issue of whether or not the 
corresponding maximally defined operator $\boldsymbol{\cT}$ in $L^2(\bbR; dt;\cH)$ 
given by 
\begin{align*}
&(\boldsymbol{\cT} f)(t) = T(t) f(t), \quad t\in\bbR,   \no \\
& f \in \dom(\boldsymbol{\cT}) = \bigg\{g \in L^2(\bbR;dt;\cH) \,\bigg|\,
g(t)\in \dom(T(t)) \text{ for a.e.\ } t\in\bbR,  \no  \\
& \quad t \mapsto T(t)g(t) \text{ is weakly measurable,} \,
\int_{\bbR} \|T(t) g(t)\|_{\cH}^2 \, dt <
\infty\bigg\}.   \no
\end{align*} 
is densely defined. Our results demonstrate an interesting 
distinction between the operator $\boldsymbol{\cT}$ and the direct integral
$\bsT = \int_{\bbR}^{\oplus} T(t) \, dt$ in $L^2(\bbR; \cH)$ (the latter requires 
the additional assumption of measurability of the matrix of projections 
$\big\{\big(P(\Gamma(T(t)))_{j,k}\big)_{1\leq j,k \leq 2}\big\}_{t\in\bbR}$.

We also provide explicit criteria for the measurability of the matrix of projections 
$\big\{\big(P(\Gamma(T(t)))_{j,k}\big)_{1\leq j,k \leq 2}\big\}_{t\in\bbR}$.
\end{abstract}

\maketitle

\section{Introduction}  \lb{s1}

To briefly set the stage for this note, let $\cH$ be a separable complex Hilbert 
space, and consider the Hilbert space $L^2(\bbR; dt; \cH)$, in short, $L^2(\bbR; \cH)$, consisting of equivalence classes $f$ of weakly (and hence strongly) Lebesgue measurable $\cH$-valued elements $f(\cdot)\in\cH$ (whose elements are equal 
a.e.\ on $\bbR$), such that $\|f(\cdot)\|_{\cH} \in L^2(\bbR; dt)$. 
Of course, $L^2(\bbR; \cH)$ can be identified with the constant fiber
direct integral $\int_{\bbR}^{\oplus} \cH \, dt$, that is,
\beq
L^2(\bbR; \cH) = \int_{\bbR}^{\oplus} \cH \, dt.    \lb{1.1}
\enq

Let $\bbR \ni t \mapsto g(t)\in \cH$, then a family $\{g(t)\}_{t\in\bbR}$ is 
called {\it weakly measurable} in $\cH$ if $\bbR \ni t \mapsto (h, g(t))_{\cH}$
is $($Lebesgue$)$ measurable for each $h \in \cH$. 

Throughout this manuscript, boldface calligraphic symbols, such as 
$\boldsymbol{\cS}$, denote operators 
in the Hilbert space $L^2(\bbR;\cH)$ associated with a
family of linear operators $\{S(t)\}_{t\in\bbR}$ in $\cH$, maximally defined by 
\begin{align}
&(\boldsymbol{\cS} f)(t) = S(t) f(t) \, \text{ for a.e.\ $t\in\bbR$,}   \no \\
& f \in \dom(\boldsymbol{\cS}) = \bigg\{g \in L^2(\bbR;\cH) \,\bigg|\,
g(t)\in \dom(S(t)) \text{ for a.e.\ } t\in\bbR,    \lb{1.2}  \\
& \quad t \mapsto S(t)g(t) \text{ is (weakly) measurable,} \,
\int_{\bbR} \|S(t) g(t)\|_{\cH}^2 \, dt <\infty\bigg\}.   \no
\end{align}
One readily verifies that if $S(t)$ are closed in $\cH$ for a.e.\ $t\in\bbR$, then 
$\boldsymbol{\cS}$ is closed in $L^2(\bbR;\cH)$.

Next, let $\{T(t)\}_{t\in\bbR}$ be a family of densely defined, closed, linear 
operators in $\cH$. Then the family $\{T(t)\}_{t\in\bbR}$ is called 
{\it weakly measurable} if for any weakly measurable family of elements 
$\{f(t)\}_{t\in\bbR}$ in $\cH$ such that
$f(t) \in \dom(T(t))$ for a.e.\ $t\in\bbR$, the family of elements $\{T(t) f(t)\}_{t\in\bbR}$
 is weakly measurable in $\cH$. 
 
Given the family $\{T(t)\}_{t\in\bbR}$, one defines 
the operator $\boldsymbol{\cT}$ in $L^2(\bbR; \cH)$ as in \eqref{1.2}. Then 
$\boldsymbol{\cT}$ is closed but 
not necessarily densely defined. In order to relate $\boldsymbol{\cT}$ with the direct integral over the operators $T(t)$, $t\in\bbR$, with respect to Lebesgue measure, Nussbaum \cite{Nu64} introduces the following fundamental notion of measurability of the family $\{T(t)\}_{t\in\bbR}$, which we will call $N$-measurability in his honor.  

Denote by $\{(P(\Gamma(T(t))))_{1\leq j,k \leq 2}\}_{t\in\bbR}$ the $2 \times 2$ 
operator-valued matrix of projections onto the graph $\Gamma(T)$ of $T(t)$. 
Then the family $\{T(t)\}_{t\in\bbR}$ is called {\it $N$-measurable} if 
$\big\{(P(\Gamma(T(t)))_{j,k}\big\}_{t\in\bbR}$, $j,k \in \{1, 2\}$, are weakly measurable. 
   
Assuming the family $\{T(t)\}_{t\in\bbR}$ to be $N$-measurable, the operator 
$\boldsymbol{\cT}$ is called 
{\it decomposable} in $L^2(\bbR; \cH) = \int_{\bbR}^{\oplus} \cH \, dt$ and also 
denoted by the direct integral of the family $\{T(t)\}_{t\in\bbR}$ over $\bbR$ with 
respect to Lebesgue measure,
\begin{equation}
\bsT = \int_{\bbR}^{\oplus} T(t) \, dt.    \lb{1.3}
\end{equation}

Throughout this manuscript, boldface symbols, such as $\bsT$, denote 
operators in the Hilbert space $L^2(\bbR;\cH)$ associated with the direct 
integral over the family $\{T(t)\}_{t\in\bbR}$ as depicted in \eqref{1.3} (in 
contrast to our choice of notation $\boldsymbol{\cT}$ in the context of 
\eqref{1.2}). 

Given these preparations, we can now attempt to formulate the question posed by Nussbaum \cite[p.\ 36]{Nu64}.  

In the special case of bounded operators $T(t) \in \cB(\cH)$, $t\in\bbR$, 
Nussbaum \cite{Nu64} proved the equivalence of $N$-measurability and weak measurability of the family $\{T(t)\}_{t\in\bbR}$. Moreover, he also proved that 
in general, $N$-measurability of $\{T(t)\}_{t\in\bbR}$ implies weak measurability 
of $\{T(t)\}_{t\in\bbR}$. However, in the case of unbounded operators $T(t)$, the converse of this fact, and hence the following version of Nussbaum's question:  
\begin{equation}
\text{$\bullet$ Is $N$-measurability of $\{T(t)\}_{t\in\bbR}$ equivalent to 
weak measurability of $\{T(t)\}_{t\in\bbR}$\,?}    \lb{1.4}
\end{equation}
appears to have been open since 1964.

The principal purpose of this note is to answer Nussbaum's question in the negative
and hence demonstrate an interesting distinction between operators 
$\boldsymbol{\cT}$ in $L^2(\bbR; \cH)$ defined according to \eqref{1.2} on one hand, and direct integrals $\bsT = \int_{\bbR}^{\oplus} T(t) \, dt$ as in \eqref{1.3}, on the other hand, in the sense that it may happen that $\boldsymbol{\cT}$ exists but that 
$\boldsymbol{\cT}$ cannot be identified with $\int_{\bbR}^{\oplus} T(t) \, dt$.

In addition, we also answer the following natural question: 
\begin{align}
\begin{split}
& \text{$\bullet$ Is weak measurability of $\{T(t)\}_{t\in \bbR}$ equivalent to weak 
measurability} \\
& \;\;\; \text{of $\{T(t)^*\}_{t\in \bbR}$\,?}    \lb{1.5}
\end{split}
\end{align}
in the negative (thereby independently answering Nussbaum's question 
\eqref{1.4} in the negative once again). Finally, we also address the question of 
whether operators of the type $\boldsymbol{\cT}$ are densely defined in 
$L^2(\bbR; \cH)$. 

In Section \ref{s2} we very briefly recall basic facts on closed operators and their graphs following Stone's fundamental paper \cite{St51}. Fundamental facts for direct integrals of (unbounded) closed operators as developed in Nussbaum \cite{Nu64} (see also 
Lennon \cite{Le74} and Pallu de la Barri\`ere \cite{Pa51}) are summarized in 
Section \ref{s3}. In our final Section \ref{s4} we present counterexamples to questions 
\eqref{1.3} and \eqref{1.4}, investigate when $\boldsymbol{\cT}$ is densely defined or not, and conclude with a sufficient criterion for a weakly measurable family 
$\{T(t)\}_{t\in \bbR}$ to be $N$-measurable in terms of the resolvents of $T(t)$, 
$t\in\bbR$.

Finally, we briefly summarize some of the notation used in this paper: Let $\cH$ be a
separable complex Hilbert space, $(\cdot,\cdot)_{\cH}$ the scalar product in $\cH$
(linear in the second argument), and $I_{\cH}$ the identity operator in $\cH$.
Next, let $T$ be a linear operator mapping (a subspace of) a
Hilbert space into another, with $\dom(T)$ and $\ker(T)$ denoting the
domain and kernel (i.e., null space) of $T$. 
The resolvent set 
of a closed linear operator in $\cH$ will be denoted by $\rho(\cdot)$. 
The Banach space of bounded linear operators on $\cH$ is
denoted by $\cB(\cH)$.

\medskip

\section{Some Facts on Closed Linear Operators}  \lb{s2}

The principal purpose of this short section is to briefly recall some basic facts on 
closed operators and their graphs discussed in great detail in Stone's fundamental 
paper \cite{St51} (see also von Neumann \cite[Ch.\ XIII, App.\ II]{vN51}).

For simplicity, we make the following assumption:

\begin{hypothesis} \lb{hA.-1}
Let $\cH$ be a complex separable Hilbert space and $T$ a densely defined, closed,
linear operator in $\cH$.
\end{hypothesis}

We note that Stone \cite{St51} considers a more general situation, but Hypothesis
\ref{hA.-1} fits the purpose of our paper.

By $\Gamma (T)$ we denote the graph of
$T$, that is, the following subspace of the direct sum $\cH \oplus \cH$,
\begin{equation}
\Gamma (T) = \{\langle f, Tf\rangle \,|\, f \in \dom(T)\} \subseteq \cH \oplus \cH.  \lb{A.1}
\end{equation}
Since $T$ is assumed to be closed, $\Gamma (T)$ is a closed subspace of
$\cH \oplus \cH$. Here $\langle f, g \rangle$ denotes the ordered pair of $f, g \in \cH$,  and we use the standard norm
\begin{equation}
\|\langle f, g\rangle\|_{\cH\oplus \cH} = \big[\|f\|^2_{\cH} + \|g\|^2_{\cH}\big]^{1/2},
\quad f, g \in \cH,    \lb{A.2}
\end{equation}
and scalar product
\begin{equation}
(\langle f_1, g_1\rangle, \langle f_2, g_2\rangle)_{\cH\oplus \cH}
= (f_1,f_2)_{\cH} + (g_1,g_2)_{\cH},   \quad f_j, g_j \in \cH, \; j=1,2,     \lb{A.3}
\end{equation}
in $\cH\oplus\cH$.

If $B \in \cB(\cH\oplus\cH)$, one can uniquely represent $B$ as the $2\times 2$ block operator matrix
\begin{equation}
B = \begin{pmatrix} B_{1,1} & B_{1,2} \\ B_{2,1} & B_{2,2} \end{pmatrix},  \lb{A.4}
\end{equation}
where $B_{j,k} \in \cB(\cH)$, $j, k \in \{1,2\}$.

Denoting by
\begin{equation}
P(\Gamma(T)) =  \begin{pmatrix} P(\Gamma(T))_{1,1} 
& P(\Gamma(T))_{1,2} \\
P(\Gamma(T))_{2,1} & P(\Gamma(T))_{2,2} \end{pmatrix}    \lb{A.5}
\end{equation} 
the orthogonal projection onto $\Gamma (T)$, the corresponding matrix
$(P(\Gamma(T))_{j,k})_{1\leq j,k \leq 2}$ will be called the {\it characteristic matrix} of $T$.
Since by hypothesis $T$ is closed and densely defined, one obtains (cf.\ 
\cite{St51})
\begin{align}
& (P(\Gamma(T))_{j,k})^* = P(\Gamma(T))_{k,j}, \quad j, k \in \{1,2\},   \lb{A.6} \\
& \sum_{k=1}^2 P(\Gamma(T))_{j,k} P(\Gamma(T))_{k,\ell} 
= P(\Gamma(T))_{j,\ell}, \quad j, \ell \in \{1,2\},   \lb{A.7} \\
& \ker (P(\Gamma(T))_{1,1}) = \ker (I_{\cH} - P(\Gamma(T))_{2,2}) = \{0\},    \lb{A.8} \\
& P(\Gamma(T^*)) = \begin{pmatrix} I_{\cH} - P(\Gamma(T))_{2,2} 
& P(\Gamma(T))_{2,1} \\
P(\Gamma(T))_{1,2} & I_{\cH} - P(\Gamma(T))_{1,1} \end{pmatrix},  \lb{A.9} \\
& \ker (T) = \{0\} \, \text{ if and only if } \, \ker (I_{\cH} - P(\Gamma(T))_{1,1}) = \{0\},   
\lb{A.10} \\
& P\big(\Gamma\big(T^{-1}\big)\big) =  \begin{pmatrix} P(\Gamma(T))_{2,2} 
& P(\Gamma(T))_{2,1} \\
P(\Gamma(T))_{1,2} & P(\Gamma(T))_{1,1} \end{pmatrix} \, \text{ if } \, \ker (T) = \{0\},   
\lb{A.11}  \\
& P(\Gamma(T))_{2,1} = T P(\Gamma(T))_{1,1}, 
\quad P(\Gamma(T))_{2,2} = T P(\Gamma(T))_{1,2},    \lb{A.12} \\
& I_{\cH} - P(\Gamma(T))_{1,1} = T^* P(\Gamma(T))_{2,1}, \quad
P(\Gamma(T))_{1,2} = T^* (I_{\cH} - P(\Gamma(T))_{2,2}).    \lb{A.13}
\end{align}
In particular, one has the following explicit expressions for 
$P(\Gamma(T))_{j,k}$, $j,k \in \{1,2\}$:
\begin{align}
\begin{split}
& P(\Gamma(T))_{1,1} = (T^* T + I_{\cH})^{-1},    \\
& P(\Gamma(T))_{1,2} = T^* (T T^* + I_{\cH})^{-1},   \\
& P(\Gamma(T))_{2,1} = T (T^* T + I_{\cH})^{-1},   \lb{A.14}  \\
& P(\Gamma(T))_{2,2} = T T^* (T T^* + I_{\cH})^{-1} = I_{\cH} - (T T^* + I_{\cH})^{-1}
\end{split}
\end{align}
(see also \cite{ABJT09}, \cite{HSDS07}, and \cite{Me00} for generalizations to closed linear relations).  

\section{Basic Facts on Direct Integrals of Closed Operators}  \lb{s3}

We briefly recall some facts for direct integrals of unbounded closed operators as 
developed in Nussbaum \cite{Nu64} (see also Dixon \cite{Di71}, 
Lennon \cite{Le74}, Pallu de la Barri\`ere \cite{Pa51}, and Takemoto 
\cite{Ta75}).

We study families of densely defined, closed operators $\{T(t)\}_{t\in\bbR}$
in $\cH$ and use the following assumption for the remainder of this section:

\begin{hypothesis} \lb{hA.0}
Let $T(t)$, $t\in\bbR$, be densely defined, closed, linear operators in $\cH$.
\end{hypothesis}

We need the following notions of measurable vector and operator families:

\begin{definition} \lb{dA.1}
$(i)$ Let $\bbR \ni t \mapsto g(t)\in \cH$. Then the family $\{g(t)\}_{t\in\bbR}$
is called {\it weakly measurable} in $\cH$ if $\bbR \ni t \mapsto (h, g(t))_{\cH}$
is $($Lebesgue$)$ measurable for each $h \in \cH$. \\
Next, assume Hypothesis \ref{hA.0}: \\
$(ii)$ The family $\{T(t)\}_{t\in\bbR}$ is called {\it weakly measurable} if for any
weakly measurable family of elements $\{f(t)\}_{t\in\bbR}$ in $\cH$ such that
$f(t) \in \dom(T(t))$ for all $t\in\bbR$, the family of elements $\{T(t) f(t)\}_{t\in\bbR}$ is weakly measurable in $\cH$. \\
$(iii)$ The family $\{T(t)\}_{t\in\bbR}$ is called {\it $N$-measurable} if the entries of the characteristic matrix of $T(t)$ are weakly measurable, that is, if
$\big\{P(\Gamma(T(t)))_{j,k}\big\}_{t\in\bbR}$, $j,k \in \{1, 2\}$, are weakly measurable.
\end{definition}

For notational simplicity the vector and operator families in this note are defined for all 
$t\in\bbR$ rather than the customary a.e.\ $t\in\bbR$ with respect to Lebesgue 
measure.  


We note that measurability of the characteristic matrix
$(P(\Gamma(T(\cdot)))_{j,k})_{1\leq j,k \leq 2}$ of $T(\cdot)$ was introduced by
Nussbaum \cite{Nu64}. In fact, he considered the more general situation of a general measure $d\mu$ and a $\mu$-measurable family of Hilbert spaces 
$\{\cH(t)\}_{t\in\bbR}$.

We summarize a few consequences of Definition \ref{dA.1} in Remark \ref{rA.2} below:

\begin{remark}  \lb{rA.2}
$(i)$ Since $\cH$ is assumed to be separable,
weak measurability of the family  $\{g(t)\}_{t\in\bbR}$ in $\cH$
is equivalent to (strong) measurability, that is, there exists a sequence of
countably-valued elements $\{g_n(t)\}_{t\in\bbR} \subset \cH$,
$n\in\bbN$, and a set $\cE \subset \bbR$ of Lebesgue measure zero
such that $\lim_{n\to\infty} \|g_n(t) - g(t)\|_{\cH} =0$ for each
$t\in \bbR\backslash \cE$. Thus, the family $\{g(t)\}_{t\in\bbR}$ is  
(weakly) measurable in $\cH$ if there exists a dense set $\cY \subset \cH$
such that the function $(y, g(\cdot))_{\cH}$ is measurable for every $y \in \cY$, see,
for instance, \cite[Corollary\ 1.1.3]{ABHN01}, \cite[p.\ 42--43]{DU77}. Moreover,
\begin{equation}
f,g: \bbR \mapsto \cH \, \text{ measurable } \,
\Longrightarrow \, (f(\cdot), g(\cdot))_{\cH} \, \text{ is measurable.}   \lb{A.14aa}
\end{equation}
$(ii)$ One can show (cf.\ \cite[Corollary\ 2]{Nu64}) that
\begin{equation}
\text{$N$-measurability of $\{T(t)\}_{t\in\bbR}$ $\Longrightarrow$ weak measurability of
$\{T(t)\}_{t\in\bbR}$}.   \lb{A.14a}
\end{equation}
(The converse, however, is false as we will show in Section \ref{s4}.) \\
$(iii)$ Since by \eqref{A.6}, 
$P(\Gamma(T(t)))_{2,1} = (P(\Gamma(T(t)))_{1,2})^*$, or equivalently, since
\begin{align}
\begin{split}
\big[T(t) (T(t)^* T(t) + I_{\cH})^{-1}\big]^* &= T(t)^* (T(t) T(t)^* + I_{\cH})^{-1} \\
& \supseteq  (T(t)^* T(t) + I_{\cH})^{-1} T(t)^*,
\end{split}
\end{align}
as $T(t)$ is closed in $\cH$ (this follows from standard properties of adjoints of products of linear operators and from \eqref{A.6}, \eqref{A.9}, and \eqref{A.14}; see also 
\cite[Theorem\,2\,(ii)]{De78}), weak measurability of
the operator $\{P(\Gamma(T(t)))_{1,2}\}_{t\in\bbR}$ is equivalent to that of 
$\{P(\Gamma(T(t)))_{2,1}\}_{t\in\bbR}$. Thus, by \eqref{A.14},
\begin{align}
& \text{$N$-measurability of $\{T(t)\}_{t\in\bbR}$ is equivalent to weak measurability of}  \no  \\
& \quad \big\{\big(|T(t)|^2 + I_{\cH}\big)^{-1}\big\}_{t\in\bbR},
\quad \big\{T(t) \big(|T(t)|^2 + I_{\cH}\big)^{-1}\big\}_{t\in\bbR},    \lb{A.14A} \\
& \quad \text{and } \, \big\{\big(|T(t)^*|^2 + I_{\cH}\big)^{-1}\big\}_{t\in\bbR}.    \no
\end{align}
$(iv)$ Items $(ii)$ and $(iii)$ show that
\begin{equation}
\text{$N$-measurability of $\{T(t)\}_{t\in\bbR}$ $\Longleftrightarrow$
$N$-measurability of $\{T(t)^*\}_{t\in\bbR}$.}    \lb{A.14B}
\end{equation}
$(v)$ If $T(t) \in \cB(\cH)$, $t\in\bbR$, then weak measurability of $\{T(t)\}_{t\in\bbR}$
is equivalent to
\begin{equation}
(g, T(t) h)_{\cH} \, \text{ is (Lebesgue) measurable for each $g, h \in \cH$.}  \lb{A.14b}
\end{equation}
Moreover (cf.\ \cite[Proposition\ 6]{Nu64}),
\begin{align}
\begin{split}
& \text{if $T(t) \in \cB(\cH)$ for a.e.\  $t\in\bbR$, then} \\
& \text{weak measurability of $\{T(t)\}_{t\in\bbR}$ $\Longleftrightarrow$
$N$-measurability of $\{T(t)\}_{t\in\bbR}$.}    \lb{A.14c}
\end{split}
\end{align}
\end{remark}

 \smallskip
The Hilbert space $L^2(\bbR; dt; \cH)$, in short, $L^2(\bbR; \cH)$, consists of equivalence 
classes $f$ of weakly (and hence strongly) Lebesgue measurable
$\cH$-valued elements $f(\cdot)\in\cH$ (whose elements are equal a.e.\ on $\bbR$), 
such that $\|f(\cdot)\|_{\cH} \in L^2(\bbR; dt)$. The norm and scalar product on
$L^2(\bbR; \cH)$ are then given by
\beq
  \|f\|_{L^2(\bbR; \cH)}^2 = \int_{\bbR}  \|f(t)\|_{\cH}^2 \, dt, \;\;
  (f,g)_{L^2(\bbR; \cH)} = \int_{\bbR}  (f(t), g(t))_{\cH} \, dt, \;\;
  f, g \in L^2(\bbR; \cH).   \lb{A.15}
\enq

Of course, $L^2(\bbR; \cH)$ can be identified with the constant fiber
direct integral $\int_{\bbR}^{\oplus} \cH \, dt$ (cf., e.g., \cite[Sect.\ II.1]{Di96}, 
\cite[Ch.\ XII]{vN51}), that is,
\beq
L^2(\bbR; \cH) = \int_{\bbR}^{\oplus} \cH \, dt.    \lb{A.16}
\enq

We recall our convention that throughout this manuscript, boldface calligraphic 
symbols, such as $\boldsymbol{\cS}$, denote operators in the Hilbert space 
$L^2(\bbR;\cH)$ associated with a
family of linear operators $\{S(t)\}_{t\in\bbR}$ in $\cH$, maximally defined by
\begin{align}
&(\boldsymbol{\cS} f)(t) = S(t) f(t) \, \text{ for a.e.\ $t\in\bbR$,}   \no \\
& f \in \dom(\boldsymbol{\cS}) = \bigg\{g \in L^2(\bbR;\cH) \,\bigg|\,
g(t)\in \dom(S(t)) \text{ for a.e.\ } t\in\bbR,    \lb{A.17}  \\
& \quad t \mapsto S(t)g(t) \text{ is (weakly) measurable,} \,
\int_{\bbR} \|S(t) g(t)\|_{\cH}^2 \, dt <
\infty\bigg\}.   \no
\end{align}
An elementary argument shows that if $S(t)$ are closed in $\cH$ for all $t\in\bbR$, 
then $\boldsymbol{\cS}$ is closed in $L^2(\bbR;\cH)$. Indeed, suppose that 
$\{f_n\}_{n\in\bbN} \subset \dom(\bfS)$ such that for some $f, g \in L^2(\bbR;\cH)$, 
\begin{align}
& \lim_{n\to\infty} \big[\|\boldsymbol{\cS} f_n - g\|_{L^2(\bbR;\cH)}^2  
+ \|f_n - f\|_{L^2(\bbR;\cH)}^2\big]   \\
& \quad = \lim_{n\to\infty} \int_{\bbR} dt \, \big[\|S(t) f_n(t) - g(t)\|_{\cH}^2  
+ \|f_n(t) - f(t)\|_{\cH}^2\big] =0.  
\end{align}
Then there exists a subsequence $\{f_{n_{m}}\}_{m\in\bbN}$ of $\{f_n\}_{n\in\bbN}$ 
such that 
\begin{equation}
\lim_{m\to\infty} \big[\|S(t) f_{n_m}(t) - g(t)\|_{\cH}^2  
+ \|f_{n_m}(t) - f(t)\|_{\cH}^2\big] =0 \, \text{ for a.e.\ $t\in\bbR$.}    \lb{A.17a}
\end{equation}
Since by hypothesis $S(t)$ is closed in $\cH$ for all $t\in\bbR$, \eqref{A.17a} 
implies that for a.e.\ $t\in\bbR$, $f(t) \in \dom(S(t))$, $\{f(t)\}_{t\in\bbR}$ is (weakly) measurable in $\cH$ (cf.\ Remark \ref{rA.2}\,$(i)$), and $S(t) f(t) = g(t)$, that is, 
$f \in \dom(\boldsymbol{\cS})$ and 
$\boldsymbol{\cS} f =g$, proving that $\boldsymbol{\cS}$ is closed.

Thus, assuming Hypothesis \ref{hA.0}, one infers that $\boldsymbol{\cT}$, defined according to \eqref{A.17} in terms of the family $\{T(t)\}_{t\in\bbR}$, is closed in 
$L^2(\bbR; \cH)$ (but $\boldsymbol{\cT}$ might not be densely defined, 
cf.\ Example \ref{e4.2} and Remark \ref{r4.5}). If in addition, the 
family $\{T(t)\}_{t\in\bbR}$ is $N$-measurable, then $\boldsymbol{\cT}$ is called 
{\it decomposable} in $L^2(\bbR; \cH) = \int_{\bbR}^{\oplus} \cH \, dt$ and also denoted by the direct integral of the family $\{T(t)\}_{t\in\bbR}$ over $\bbR$ with respect to Lebesgue measure,
\begin{equation}
\bsT = \int_{\bbR}^{\oplus} T(t) \, dt   \lb{A.18}
\end{equation}
(cf.\ also \cite[Ch.\ II]{Di96}, \cite{Ma50}, \cite[Ch.\ I]{Sc67}, \cite{vN49}, 
\cite[Ch.\ XIV]{vN51} 
in the context of bounded operators; \cite{Ch70}, \cite{Le74a} in connection with  spectral operators; \cite{Gi73} for a functional calculus of decomposable operators).
In this case, one also has
\begin{equation}
\bsP(\Gamma(\bsT))_{j,k} = \int_{\bbR}^{\oplus} P(\Gamma(T(t)))_{j,k} \, dt, \quad
j,k \in \{1,2\}.   \lb{A.19}
\end{equation}

We recall once more our convention throughout this manuscript that boldface symbols, such as $\bsT$, denote operators in the Hilbert space $L^2(\bbR;\cH)$ associated with the direct integral over the family $\{T(t)\}_{t\in\bbR}$ as depicted in \eqref{A.18} (as opposed to our choice of notation $\boldsymbol{\cT}$ in the context of \eqref{A.17}). 

If $T(t) \in \cB(\cH)$, $t\in\bbR$, then
\begin{equation}
\boldsymbol{\cT} \in \cB(L^2(\bbR; \cH)) \Longleftrightarrow 
{\rm esssup}_{t\in\bbR} \|T(t)\|_{\cB(\cH)}
< \infty,
\end{equation}
in particular, if $\boldsymbol{\cT} \in \cB(L^2(\bbR; \cH))$, then
\begin{equation}
\|\boldsymbol{\cT} \|_{\cB(L^2(\bbR; \cH))} = {\rm esssup}_{t\in\bbR} \|T(t)\|_{\cB(\cH)}.
\end{equation}

We recall the following results of Nussbaum \cite{Nu64} (in fact, he deals with the more general situation
where the constant fiber space $\cH$ is replaced by a measurable family of Hilbert spaces $\{\cH(t)\}_{t\in\bbR}$):

\begin{lemma} [Nussbaum \cite{Nu64}]  \lb{lA.5}
Assume Hypothesis \ref{hA.0} and suppose in addition that the family
$\{T(t)\}_{t\in\bbR}$ is weakly measurable. Define $\boldsymbol{\cT}$ according to 
\eqref{A.17}. Then $\boldsymbol{\cT}$ is a closed, decomposable operator in
$L^2(\bbR; \cH) = \int_{\bbR}^{\oplus} \cH \, dt$. Thus, there exists an $N$-measurable
family of closed operators $\big\{\hatt T(t)\big\}_{t\in\bbR}$ in $\cH$ such that
\begin{equation}
\boldsymbol{\cT} = \int_{\bbR}^{\oplus} \hatt T(t) \, dt
\end{equation}
and
\begin{equation}
\hatt T(t) \subseteq T(t) \, \text{ for a.e.\ $t\in\bbR$.}
\end{equation}
\end{lemma}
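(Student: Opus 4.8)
The plan is to reduce the assertion to the decomposability of the orthogonal projection $\boldsymbol{P}(\Gamma(\boldsymbol{\cT}))$ onto the (closed) graph of $\boldsymbol{\cT}$, viewed as a bounded operator on the constant fiber direct integral $L^2(\bbR;\cH)\oplus L^2(\bbR;\cH)=L^2(\bbR;\cH\oplus\cH)=\int_{\bbR}^{\oplus}(\cH\oplus\cH)\,dt$. I would invoke the standard characterization for a constant fiber (cf.\ \cite[Ch.\ II]{Di96}) that a bounded operator on $\int_{\bbR}^{\oplus}(\cH\oplus\cH)\,dt$ is decomposable if and only if it lies in the commutant $\cZ'$ of the abelian algebra $\cZ$ of diagonalizable operators, i.e.\ if and only if it commutes with every scalar multiplication operator $\boldsymbol{M}_\phi$, $(\boldsymbol{M}_\phi F)(t)=\phi(t)F(t)$, $\phi\in L^\infty(\bbR)$, $F\in L^2(\bbR;\cH\oplus\cH)$. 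Since $\boldsymbol{\cT}$ is closed (as recorded above), $\Gamma(\boldsymbol{\cT})$ is a closed subspace and $\boldsymbol{P}(\Gamma(\boldsymbol{\cT}))$ is well defined, so it suffices to show $\boldsymbol{P}(\Gamma(\boldsymbol{\cT}))\in\cZ'$.

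Second, I would establish that $\Gamma(\boldsymbol{\cT})$ is invariant under every $\boldsymbol{M}_\phi$. Given $f\in\dom(\boldsymbol{\cT})$ and $\phi\in L^\infty(\bbR)$, the element $\phi f$ again lies in $L^2(\bbR;\cH)$, satisfies $(\phi f)(t)=\phi(t)f(t)\in\dom(T(t))$ for a.e.\ $t$ (as $\dom(T(t))$ is a subspace), the map $t\mapsto T(t)(\phi(t)f(t))=\phi(t)(\boldsymbol{\cT}f)(t)$ is measurable (a measurable scalar times a measurable field), and $\int_{\bbR}\|\phi(t)(\boldsymbol{\cT}f)(t)\|_{\cH}^2\,dt\le\|\phi\|_{\infty}^2\|\boldsymbol{\cT}f\|_{L^2(\bbR;\cH)}^2<\infty$; hence $\phi f\in\dom(\boldsymbol{\cT})$ with $\boldsymbol{\cT}(\phi f)=\phi\,\boldsymbol{\cT}f$, so $\boldsymbol{M}_\phi\langle f,\boldsymbol{\cT}f\rangle=\langle\phi f,\phi\boldsymbol{\cT}f\rangle\in\Gamma(\boldsymbol{\cT})$. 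Because $\{\boldsymbol{M}_\phi\}_{\phi\in L^\infty(\bbR)}$ is self-adjoint ($\boldsymbol{M}_\phi^{*}=\boldsymbol{M}_{\overline{\phi}}$), invariance of the closed subspace $\Gamma(\boldsymbol{\cT})$ forces invariance of $\Gamma(\boldsymbol{\cT})^{\perp}$ as well, and the usual argument then gives that $\boldsymbol{P}(\Gamma(\boldsymbol{\cT}))$ commutes with every $\boldsymbol{M}_\phi$. Thus $\boldsymbol{P}(\Gamma(\boldsymbol{\cT}))$ is decomposable, $\boldsymbol{P}(\Gamma(\boldsymbol{\cT}))=\int_{\bbR}^{\oplus}Q(t)\,dt$ with a weakly measurable family of orthogonal projections $Q(t)$ on $\cH\oplus\cH$, and I set $\cG_t:=\ran(Q(t))$, so that $\Gamma(\boldsymbol{\cT})=\{F:F(t)\in\cG_t\text{ for a.e.\ }t\}$.

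Third, I would identify each fiber $\cG_t$ as a graph inside $\Gamma(T(t))$. Using separability of $\cH$, I would produce a fundamental sequence of measurable vector fields $e_n(t)=Q(t)\psi_n(t)$, where $\{\psi_n\}_{n\in\bbN}$ is a countable, total family of fields each supported on a set of finite measure (hence $e_n\in L^2(\bbR;\cH\oplus\cH)$), so that $\{e_n(t)\}_{n\in\bbN}$ is total in $\cG_t$ for a.e.\ $t$. Each $e_n$ lies in $\ran(\boldsymbol{P}(\Gamma(\boldsymbol{\cT})))=\Gamma(\boldsymbol{\cT})$, whence $e_n(t)\in\Gamma(T(t))$ for all $t$ outside a null set $\cE_n$. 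Amalgamating the countably many null sets into $\cE=\bigcup_{n}\cE_n$ and using closedness of $\Gamma(T(t))$, I obtain $\cG_t\subseteq\Gamma(T(t))$ for all $t\notin\cE$. Since $\Gamma(T(t))$ is a graph, it contains no element $\langle 0,g\rangle$ with $g\neq 0$; the same then holds for the closed subspace $\cG_t$, which is therefore the graph $\cG_t=\Gamma(\hatt T(t))$ of a uniquely determined closed linear operator $\hatt T(t)$ with $\hatt T(t)\subseteq T(t)$, $t\notin\cE$.

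Finally, I would assemble the conclusion: by construction $Q(t)=P(\Gamma(\hatt T(t)))$ is weakly measurable, so $\{\hatt T(t)\}_{t\in\bbR}$ is $N$-measurable, and the identities $\Gamma(\boldsymbol{\cT})=\ran\big(\int_{\bbR}^{\oplus}Q(t)\,dt\big)=\int_{\bbR}^{\oplus}\Gamma(\hatt T(t))\,dt=\Gamma\big(\int_{\bbR}^{\oplus}\hatt T(t)\,dt\big)$ identify $\boldsymbol{\cT}$ with the direct integral $\int_{\bbR}^{\oplus}\hatt T(t)\,dt$ in the sense of \eqref{A.18}. I expect the principal difficulty to be the third step: upgrading the ``for each element, almost every $t$'' information to the fiberwise inclusion $\cG_t\subseteq\Gamma(T(t))$ holding for almost every $t$ simultaneously. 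This rests on the measurable structure of decomposable subspaces (the existence of a countable fundamental sequence of measurable vector fields spanning the fibers) and on the amalgamation of the associated exceptional null sets, both of which depend essentially on the separability of $\cH$.
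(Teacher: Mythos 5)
The paper does not prove Lemma \ref{lA.5} at all: it is recalled verbatim from Nussbaum \cite{Nu64} (the paper only says ``We recall the following results of Nussbaum''), so there is no internal proof to compare against. Judged on its own merits, your reconstruction is correct, and it is in essence the classical reduction-theory argument underlying Nussbaum's original proof: (1) the invariance computation $\boldsymbol{\cT}(\phi f)=\phi\,\boldsymbol{\cT}f$ for $\phi\in L^\infty(\bbR)$ is sound, since the measurability of $t\mapsto T(t)f(t)$ is built into \eqref{A.17} and multiplication by a bounded measurable scalar preserves it; (2) since the multiplication algebra is $*$-closed, invariance of the closed subspace $\Gamma(\boldsymbol{\cT})$ and of its orthogonal complement does force $\boldsymbol{P}(\Gamma(\boldsymbol{\cT}))$ into the commutant of the diagonalizable algebra, and the von Neumann--Dixmier commutant theorem (legitimately cited, and valid here because $\cH$ is separable) yields the decomposition $\boldsymbol{P}(\Gamma(\boldsymbol{\cT}))=\int_{\bbR}^{\oplus}Q(t)\,dt$ with a.e.\ fiber projections; (3) your handling of the delicate step---upgrading ``for each $\boldsymbol{P}\psi_n$, a.e.\ $t$'' to ``$\ran(Q(t))\subseteq\Gamma(T(t))$ for a.e.\ $t$'' via a countable fundamental sequence and amalgamation of null sets---is exactly where care is needed, and your argument (totality of $\{Q(t)\psi_n(t)\}_n$ in $\ran(Q(t))$ plus closedness of $\Gamma(T(t))$) closes it; (4) a closed subspace of a graph is a graph, and the identification $\Gamma(\boldsymbol{\cT})=\{F\,|\,F(t)\in\Gamma(\hatt T(t)) \text{ a.e.}\}=\Gamma\big(\int_{\bbR}^{\oplus}\hatt T(t)\,dt\big)$ holds because for $F=\langle f,g\rangle\in L^2$ the measurability and square-integrability of $g=\hatt T(\cdot)f(\cdot)$ are automatic. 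One small observation: your argument never actually invokes the weak measurability of $\{T(t)\}_{t\in\bbR}$ (its role is only to make the measurability clause in \eqref{A.17} automatic, i.e., to make $\dom(\boldsymbol{\cT})$ genuinely maximal), so you have in effect proved a marginally more general statement; this is consistent with, not in conflict with, the lemma as stated, and with the paper's subsequent remark that the $\hatt T(t)$ need not be densely defined.
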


We note that it is not known if $\hatt T(t)$ in Lemma \ref{lA.5} are densely defined
for a.e.\ $t\in\bbR$ in $\cH$ (in which case also $\bfT$ would be densely defined in
$L^2(\bbR; \cH)$; see also Remark \ref{r4.5}).

\begin{theorem} [Nussbaum \cite{Nu64}]  \lb{tA.6}
Assume Hypothesis \ref{hA.0} and suppose in addition that the family
$\{T(t)\}_{t\in\bbR}$ is $N$-measurable. Then the following assertions hold: \\
$(i)$ $\bsT = \int_{\bbR}^{\oplus} T(t) \, dt$ is densely defined and closed in
$L^2(\bbR; \cH) = \int_{\bbR}^{\oplus} \cH \, dt$ and
\begin{equation}
\bsT^* = \int_{\bbR}^{\oplus} T(t)^* \, dt, \quad
|\bsT| = \int_{\bbR}^{\oplus} |T(t)| \, dt.      \lb{A.20}
\end{equation}
$(ii)$ $\bsT$ is symmetric $($resp., self-adjoint, or normal\,$)$ if and only if $T(t)$ is
symmetric $($resp., self-adjoint, or normal\,$)$ for a.e.\ $t\in\bbR$. \\
$(iii)$ $\ker(\bsT) = \{0\}$ if and only if $\ker(T(t)) = \{0\}$ for a.e.\ $t\in\bbR$. In addition,
if $\ker(\bsT) = \{0\}$, then $\big\{T(t)^{-1}\big\}_{t\in\bbR}$ is $N$-measurable and 
\begin{equation}
\bsT^{-1} = \int_{\bbR}^{\oplus} T(t)^{-1} \, dt.    \lb{A.21}
\end{equation}
$(iv)$ If $\bsT$ is self-adjoint in $L^2(\bbR; \cH)$, then $\bsT \geq 0$ if and only
if $T(t) \geq 0$ for a.e.\ $t\in\bbR$. \\
$(v)$ If $\bsT$ is normal in $L^2(\bbR; \cH)$, then
\begin{equation}
p(\bsT) = \int_{\bbR}^{\oplus} p(T(t)) \, dt    \lb{A.22}
\end{equation}
for any polynomial $p$. \\
$(vi)$ Let $S(t)$, $t\in\bbR$, be densely defined, closed operators in $\cH$ and assume
that the family $\{S(t)\}_{t\in\bbR}$ is $N$-measurable and
$\bsS = \int_{\bbR}^{\oplus} S(t)\, dt$. Then $\bsT \subseteq \bsS$ if and only if
$T(t) \subseteq S(t)$ for a.e.\ $t\in\bbR$.
\end{theorem}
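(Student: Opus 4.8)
The plan is to reduce every assertion about the generally unbounded operator $\bsT$ to a statement about its bounded \emph{characteristic matrix} $(\bsP(\Gamma(\bsT))_{j,k})_{1\le j,k\le 2}$, which is governed by the classical theory of bounded decomposable operators on $L^2(\bbR;\cH)=\int_{\bbR}^{\oplus}\cH\,dt$. The cornerstone is \eqref{A.19},
\[
\bsP(\Gamma(\bsT))_{j,k}=\int_{\bbR}^{\oplus}P(\Gamma(T(t)))_{j,k}\,dt,\qquad j,k\in\{1,2\},
\]
which I would prove first. Set $\bsP:=\int_{\bbR}^{\oplus}P(\Gamma(T(t)))\,dt$ on $L^2(\bbR;\cH)\oplus L^2(\bbR;\cH)=L^2(\bbR;\cH\oplus\cH)$; this is a well-defined bounded decomposable operator precisely because $N$-measurability makes each block weakly measurable, and it is an orthogonal projection since $\bsP=\bsP^*=\bsP^2$ holds fiberwise. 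Because the range of a decomposable projection is the direct integral of the fiber ranges, a pair $\langle f,g\rangle$ lies in $\ran(\bsP)$ iff $\langle f(t),g(t)\rangle\in\ran(P(\Gamma(T(t))))=\Gamma(T(t))$ for a.e.\ $t\in\bbR$, i.e.\ iff $f(t)\in\dom(T(t))$ and $g(t)=T(t)f(t)$ for a.e.\ $t$. Comparing with \eqref{A.17}, this says exactly $\ran(\bsP)=\Gamma(\bsT)$, so $\bsP$ is the orthogonal projection onto $\Gamma(\bsT)$ and \eqref{A.19} follows.

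For part $(i)$, closedness of $\bsT$ has already been recorded, and dense definedness follows from Stone's criterion \eqref{A.8}: it suffices to check $\ker(\bsP_{1,1})=\ker(I-\bsP_{2,2})=\{0\}$. Since the kernel of a bounded decomposable operator is trivial iff a.e.\ fiber is injective, and since by \eqref{A.14} each fiber $P(\Gamma(T(t)))_{1,1}=(T(t)^*T(t)+I_{\cH})^{-1}$ and $I_{\cH}-P(\Gamma(T(t)))_{2,2}=(T(t)T(t)^*+I_{\cH})^{-1}$ is boundedly invertible, both kernels vanish; hence $\ran(\bsP)$ is the graph of a densely defined closed operator, namely $\bsT$. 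Feeding \eqref{A.19} into the block rearrangement \eqref{A.9} then gives $\bsT^*=\int_{\bbR}^{\oplus}T(t)^*\,dt$: direct integration commutes with the reshuffling of blocks in \eqref{A.9}, and $\{T(t)^*\}_{t\in\bbR}$ is $N$-measurable by \eqref{A.14B}. For $|\bsT|$ I would use that \eqref{A.14} applied to the closed densely defined operator $\bsT$ yields $(\bsT^*\bsT+I)^{-1}=\bsP_{1,1}=\int_{\bbR}^{\oplus}(T(t)^*T(t)+I_{\cH})^{-1}\,dt$; taking the Borel functional calculus of this nonnegative decomposable operator, which acts fiberwise, produces $|\bsT|=(\bsT^*\bsT)^{1/2}=\int_{\bbR}^{\oplus}|T(t)|\,dt$.

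The remaining parts are fiberwise translations of the same kind. For $(ii)$, having written $\bsT$ and $\bsT^*$ as direct integrals, symmetry ($\bsT\subseteq\bsT^*$) and self-adjointness ($\bsT=\bsT^*$) reduce to the fiberwise relations via $(vi)$, while normality is equivalent to $\bsP_{1,1}=I-\bsP_{2,2}$, i.e.\ $|\bsT|=|\bsT^*|$, which holds iff it holds a.e.\ fiberwise. For $(iii)$, \eqref{A.10} gives $\ker(\bsT)=\{0\}$ iff $\ker(I-\bsP_{1,1})=\{0\}$, computed fiberwise, so $\ker(\bsT)=\{0\}$ iff $\ker(T(t))=\{0\}$ for a.e.\ $t$; when this holds, \eqref{A.11} identifies $\int_{\bbR}^{\oplus}P(\Gamma(T(t)^{-1}))\,dt$ (the block-swapped characteristic matrix) with that of $\bsT^{-1}$, establishing $N$-measurability of $\{T(t)^{-1}\}_{t\in\bbR}$ and \eqref{A.21}. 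Part $(iv)$ follows from the functional calculus of the self-adjoint decomposable operator $\bsT$, whose spectrum lies in $[0,\infty)$ iff a.e.\ fiber's does; part $(v)$ is the analogous statement for the normal functional calculus; and part $(vi)$ follows from the fact that, for the graph projections, $\bsT\subseteq\bsS$ iff $\bsP(\Gamma(\bsT))\le\bsP(\Gamma(\bsS))$ iff this ordering of decomposable projections holds a.e.\ fiberwise.

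The main obstacle is the transfer of functional calculus across the direct integral for the \emph{unbounded} operators involved: the square root in $(i)$, the positivity criterion in $(iv)$, and especially the normal functional calculus in $(v)$, all of which require that the spectral resolution of a self-adjoint (resp.\ normal) decomposable operator be itself decomposable with a.e.\ fiberwise spectral projections. The second delicate point is the ``only if'' (fiberwise necessity) directions in $(ii)$, $(iii)$, $(iv)$, and $(vi)$: there one must manufacture, from a failure on a set of positive measure, a genuine element of $L^2(\bbR;\cH)$ witnessing that failure, which rests on a measurable selection argument in the fibers. Both points are where the separability of $\cH$ and the measurable-field machinery behind \eqref{A.16} are essential.
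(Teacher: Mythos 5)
A preliminary remark: the paper itself contains \emph{no proof} of Theorem \ref{tA.6} --- it is stated as a quotation of Nussbaum's results in \cite{Nu64} --- so there is no in-paper argument to measure your proposal against; what follows judges it on its own merits. Your plan is sound, and it is in substance the route Nussbaum's reduction theory itself takes: funnel every assertion through the bounded characteristic matrix and apply Stone's calculus \eqref{A.8}--\eqref{A.14} fiberwise. Your proof of \eqref{A.19} is correct: a decomposable projection fixes exactly those elements whose fibers it fixes a.e., and that fixed-point set is precisely the graph of the maximal operator \eqref{A.17}, which under $N$-measurability is $\bsT$ by definition; combined with Stone's converse criterion this yields closedness and dense definedness in $(i)$. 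The block-reshuffle argument for $\bsT^*$ (via \eqref{A.9} and \eqref{A.14B}), the inverse formula in $(iii)$ (via \eqref{A.10} and \eqref{A.11}), and part $(vi)$ --- where only a.e.\ uniqueness of fibers of bounded decomposable operators is needed, since $\bsT\subseteq\bsS$ amounts to $\bsP(\Gamma(\bsS))\,\bsP(\Gamma(\bsT))=\bsP(\Gamma(\bsT))$ --- are all correct. One slip: the fibers $(T(t)^*T(t)+I_{\cH})^{-1}$ are \emph{not} boundedly invertible unless $T(t)$ is bounded; fortunately only their injectivity enters your kernel argument, so $(i)$ survives.

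What keeps the proposal from being a complete proof are exactly the two points you flag yourself: (a) that the functional calculus of a bounded self-adjoint (resp.\ normal) decomposable operator is again decomposable and acts fiberwise, needed for $|\bsT|$ in $(i)$ and for $(iv)$, $(v)$; and (b) the necessity directions, where a failure on a set of positive measure must be converted into an element of $L^2(\bbR;\cH)$. Since the entire theorem is being reduced to these facts, they are the technical heart and must be proved rather than deferred to ``measurable-field machinery.'' They are, however, more elementary than your closing paragraph suggests, and abstract measurable selection can be avoided entirely. For (b): a bounded self-adjoint decomposable operator $\bsA=\int_{\bbR}^{\oplus}A(t)\,dt$ satisfies $\bsA\geq 0$ if and only if $A(t)\geq 0$ a.e., by testing against $\chi_E(\cdot)\,h_k$ with $E\subseteq\bbR$ measurable and $\{h_k\}_{k\in\bbN}$ a countable dense subset of $\cH$ (the same device proves a.e.\ uniqueness of fibers); this settles $(iv)$ once one notes that $\bsT\geq 0$ iff $\bsP(\Gamma(\bsT))_{2,1}=\bsT\big(\bsT^2+I\big)^{-1}\geq 0$, by the spectral theorem applied in $L^2(\bbR;\cH)$ and in each fiber separately, with no decomposition theory needed. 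Likewise, for $(iii)$ the projection onto $\ker(I-\bsP_{1,1})$ is the strong limit of the decomposable operators $\big(I+n(I-\bsP_{1,1})\big)^{-1}$, hence itself decomposable with fibers the projections onto $\ker\big(I_{\cH}-P(\Gamma(T(t)))_{1,1}\big)$, which gives the a.e.\ characterization of $\ker(\bsT)=\{0\}$ via \eqref{A.10}. For (a), it suffices to establish the \emph{continuous} calculus fiberwise (polynomials act fiberwise by the product rule for bounded decomposable operators, and uniform limits pass to the fibers a.e.), which is all that the characteristic matrix of $|\bsT|$ requires; for $(v)$, an alternative that bypasses the normal functional calculus altogether is to iterate Lennon's Theorem \ref{lA.7}, which the paper records, using that $p(T(t))=\ol{\sum_k a_k T(t)^k}$ for normal fibers.
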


Since $N$-measurability is a crucial hypothesis in Theorem \ref{tA.6}, we emphasize
Remark \ref{rA.2}\,$(iii)$ which represents necessary and sufficient conditions which
seem verifiable in practical situations. In addition, we note the following result:

\begin{lemma} \lb{lA.6}
Assume Hypothesis \ref{hA.0} and suppose that
\begin{equation}
\{T(t)\}_{t\in\bbR}, \quad \big\{\big(|T(t)|^2 + I_{\cH}\big)^{-1}\big\}_{t\in\bbR}, \, \text{ and } \,
\big\{T(t)\big(|T(t)|^2+I_{\cH}\big)^{-1}\big\}_{t\in\bbR}
\end{equation}
are weakly measurable. Then $\{T(t)\}_{t\in\bbR}$ is $N$-measurable.
\end{lemma}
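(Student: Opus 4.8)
The plan is to reduce the claim to the criterion recorded in Remark~\ref{rA.2}\,$(iii)$, specifically the equivalence \eqref{A.14A}: the family $\{T(t)\}_{t\in\bbR}$ is $N$-measurable if and only if the three families $\big\{(|T(t)|^2+I_{\cH})^{-1}\big\}$, $\big\{T(t)(|T(t)|^2+I_{\cH})^{-1}\big\}$, and $\big\{(|T(t)^*|^2+I_{\cH})^{-1}\big\}$ are all weakly measurable. Two of these, namely $\big\{(|T(t)|^2+I_{\cH})^{-1}\big\} = \{P(\Gamma(T(t)))_{1,1}\}$ and $\big\{T(t)(|T(t)|^2+I_{\cH})^{-1}\big\} = \{P(\Gamma(T(t)))_{2,1}\}$, are weakly measurable by hypothesis. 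Hence the entire proof reduces to deducing weak measurability of the third family, $\big\{(|T(t)^*|^2+I_{\cH})^{-1}\big\} = \{(T(t)T(t)^*+I_{\cH})^{-1}\}$; by \eqref{A.14} this is the same as weak measurability of $\{P(\Gamma(T(t)))_{2,2}\}_{t\in\bbR}$, since $P(\Gamma(T(t)))_{2,2} = I_{\cH} - (T(t)T(t)^*+I_{\cH})^{-1}$.

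For the main step I would exploit the factorization \eqref{A.12}, namely $P(\Gamma(T(t)))_{2,2} = T(t)\,P(\Gamma(T(t)))_{1,2}$, which in particular records that $\ran\big(P(\Gamma(T(t)))_{1,2}\big)\subseteq \dom(T(t))$ for every $t\in\bbR$. Since $\{P(\Gamma(T(t)))_{2,1}\}_{t\in\bbR}$ is weakly measurable by hypothesis, Remark~\ref{rA.2}\,$(iii)$ (or, directly, the identity $(g, P(\Gamma(T(t)))_{1,2}h)_{\cH} = \overline{(h, P(\Gamma(T(t)))_{2,1}g)_{\cH}}$ together with \eqref{A.6}) shows that the adjoint family $\{P(\Gamma(T(t)))_{1,2}\}_{t\in\bbR}$ is weakly measurable as well. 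Fix $h\in\cH$ and set $f(t) := P(\Gamma(T(t)))_{1,2}\,h$. Then $\{f(t)\}_{t\in\bbR}$ is a weakly measurable family of vectors in $\cH$ with $f(t)\in\dom(T(t))$ for all $t\in\bbR$, so by the weak measurability of the operator family $\{T(t)\}_{t\in\bbR}$ (Definition~\ref{dA.1}\,$(ii)$) the family $\{T(t)f(t)\}_{t\in\bbR} = \{P(\Gamma(T(t)))_{2,2}\,h\}_{t\in\bbR}$ is weakly measurable. As $h\in\cH$ was arbitrary, $\{P(\Gamma(T(t)))_{2,2}\}_{t\in\bbR}$ is weakly measurable, which is precisely the missing third condition. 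Invoking \eqref{A.14A} then completes the argument.

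The one delicate point, and where I would spend most care, is verifying that the hypotheses of Definition~\ref{dA.1}\,$(ii)$ genuinely apply to the auxiliary family $f(t) = P(\Gamma(T(t)))_{1,2}h$: one must check the pointwise inclusion $f(t)\in\dom(T(t))$ for every $t$ (supplied by \eqref{A.12}) and the weak measurability of $\{f(t)\}_{t\in\bbR}$, using the separability of $\cH$ to pass freely between weak and strong measurability of vector families as in Remark~\ref{rA.2}\,$(i)$. The merit of routing the argument through $P(\Gamma(T(t)))_{2,2} = T(t)\,P(\Gamma(T(t)))_{1,2}$, rather than through the formal identity $(T(t)T(t)^*+I_{\cH})^{-1} = I_{\cH} - T(t)(|T(t)|^2+I_{\cH})^{-1}T(t)^*$, is that it keeps the unbounded operator $T(t)^*$ out of the manipulations and lets the hypothesis that $\{T(t)\}_{t\in\bbR}$ be weakly measurable do exactly the work it is designed for.
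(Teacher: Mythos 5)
Your proof is correct and is essentially the paper's own argument: the paper likewise takes the adjoint of the bounded family $\big\{T(t)\big(|T(t)|^2+I_{\cH}\big)^{-1}\big\}_{t\in\bbR}$ to obtain weak measurability of $\big\{T(t)^*\big(|T(t)^*|^2+I_{\cH}\big)^{-1}\big\}_{t\in\bbR} = \{P(\Gamma(T(t)))_{1,2}\}_{t\in\bbR}$, applies the assumed weak measurability of $\{T(t)\}_{t\in\bbR}$ to the vector families $P(\Gamma(T(t)))_{1,2}\,g$ (whose ranges lie in $\dom(T(t))$), and thereby recovers $I_{\cH} - \big(|T(t)^*|^2+I_{\cH}\big)^{-1}$, concluding via \eqref{A.14A}. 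The only difference is notational: you phrase the steps through the characteristic matrix entries $P(\Gamma(T(t)))_{j,k}$, while the paper writes the same operators via their explicit resolvent formulas.
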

\begin{proof}
Since $T(t)\big(|T(t)|^2+I_{\cH}\big)^{-1} \in \cB(\cH)$, $t\in\bbR$, and
\begin{equation}
\big(T(t) \big(|T(t)|^2+I_{\cH}\big)^{-1}\big)^* = T(t)^* \big(|T(t)^*|^2+I_{\cH}\big)^{-1},
\quad t\in\bbR,
\end{equation}
one concludes that $\big\{T(t)^* \big(|T(t)^*|^2+I_{\cH}\big)^{-1}\big\}_{t\in\bbR}$ is weakly
measurable too. Thus,
for each $g \in \cH$, $\big\{T(t)^* \big(|T(t)^*|^2+I_{\cH}\big)^{-1} g\big\}_{t\in\bbR}$ is weakly
measurable in $\cH$, in addition, $T(t)^* \big(|T(t)^*|^2+I_{\cH}\big)^{-1} g \in \dom (T(t))$
for all $t\in\bbR$. Since
$\{T(t)\}_{t\in\bbR}$ is weakly measurable, one thus concludes that
\begin{equation}
\big\{T(t) T(t)^* \big(|T(t)^*|^2+I_{\cH}\big)^{-1}\big\}_{t\in\bbR}
= \big\{I_{\cH} - \big(|T(t)^*|^2+I_{\cH}\big)^{-1}\big\}_{t\in\bbR},
\end{equation}
and hence $\big\{\big(|T(t)^*|^2+I_{\cH}\big)^{-1}\big\}_{t\in\bbR}$, is weakly measurable as well.
 \end{proof}

Next, we recall a result due to Lennon \cite{Le74} on sums and products of decomposable operators (actually, Lennon considers a slightly more general situation). We use the usual conventions that if $A$ and $B$ are linear operators in $\cH$ then
\begin{equation}
\dom(A+B) = \dom(A) \cap \dom(B)
\end{equation}
and
\begin{equation}
\dom(AB) = \{f \in \dom(B) \,|\, Bf \in \dom(A)\}.
\end{equation}

\begin{theorem} [Lennon \cite{Le74}]  \lb{lA.7}
Let $\bsA = \int_{\bbR}^{\oplus} A(t)\, dt$ and $\bsB = \int_{\bbR}^{\oplus} B(t)\, dt$ 
be closed decomposable operators in $L^2(\bbR; \cH) = \int_{\bbR}^{\oplus} \cH \, dt$ with the $N$-measurable families $\{A(t)\}_{t\in\bbR}$ and $\{B(t)\}_{t\in\bbR}$ 
in $\cH$ satisfying Hypothesis \ref{hA.0}. Then the following holds: \\
$(i)$ $\dom(\bsA + \bsB)$ is dense in $L^2(\bbR; \cH)$ if and only if 
$\dom(A(t)\cap B(t))$
is dense in $\cH$ for a.e.\ $t\in\bbR$. In addition, $\bsA + \bsB$ is closable  
in $L^2(\bbR; \cH)$ if and only if $A(t) + B(t)$ is closable in $\cH$ for a.e.\ $t\in\bbR$. In this case the family
$\big\{\ol{[A(t) + B(t)]}\big\}_{t\in\bbR}$ is $N$-measurable and
\begin{equation}
\ol{\bsA + \bsB} = \int_{\bbR}^{\oplus} \ol{[A(t) + B(t)]} \, dt.
\end{equation}
$(ii)$ $\dom(\bsA \bsB)$ is dense in $L^2(\bbR; \cH)$ if and only if $\dom(A(t) B(t))$
is dense in $\cH$ for a.e.\ $t\in\bbR$. In addition, $\bsA \bsB$ is closable in 
$L^2(\bbR; \cH)$ if and only if $A(t) B(t)$ is closable in $\cH$ for a.e.\ $t\in\bbR$. 
In this case the family
$\big\{\ol{[A(t) B(t)]}\big\}_{t\in\bbR}$ is $N$-measurable and
\begin{equation}
\ol{\bsA \bsB} = \int_{\bbR}^{\oplus} \ol{[A(t) B(t)]} \, dt.
\end{equation}
\end{theorem}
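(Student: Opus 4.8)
The plan is to treat both parts uniformly by passing to graphs and exploiting the fact that the relevant subspaces of $L^2(\bbR;\cH)$ are decomposable. Write $\bsC = \bsA + \bsB$ for part $(i)$ (and $\bsC = \bsA\bsB$ for part $(ii)$), with fiber operator $C(t) = A(t) + B(t)$ (resp.\ $C(t) = A(t)B(t)$). The first step is to observe that $\dom(\bsC)$ and $\Gamma(\bsC)$ are invariant under the diagonal algebra: if $f \in \dom(\bsC)$ and $\varphi \in L^\infty(\bbR)$, then $\varphi f \in \dom(\bsC)$ with $\bsC(\varphi f) = \varphi\,\bsC f$, because multiplication by the scalar $\varphi(t)$ commutes fiberwise with $A(t)$ and $B(t)$ and preserves the $L^2$-integrability constraints in \eqref{A.17}. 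Consequently $\ol{\dom(\bsC)}$ and $\ol{\Gamma(\bsC)}$ are decomposable (closed, diagonal-reducing) subspaces, so by the standard structure theory for direct integrals (cf.\ \cite{Nu64}, \cite{Di96}) there are measurable families of closed subspaces $\{D(t)\}_{t\in\bbR}$ in $\cH$ and $\{L(t)\}_{t\in\bbR}$ in $\cH\oplus\cH$ with $\ol{\dom(\bsC)} = \int_\bbR^\oplus D(t)\,dt$ and $\ol{\Gamma(\bsC)} = \int_\bbR^\oplus L(t)\,dt$.

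The crux is to identify these fibers as $D(t) = \ol{\dom(A(t))\cap\dom(B(t))}$ and $L(t) = \ol{\Gamma(C(t))}$ for a.e.\ $t$. One inclusion is immediate: every $f\in\dom(\bsC)$ satisfies $f(t)\in\dom(A(t))\cap\dom(B(t))$ and $\langle f(t), C(t)f(t)\rangle\in\Gamma(C(t))$ for a.e.\ $t$, and since the fibers of a countable dense subset of a decomposable subspace are dense in the corresponding fibers for a.e.\ $t$, we obtain $D(t)\subseteq\ol{\dom(A(t))\cap\dom(B(t))}$ and $L(t)\subseteq\ol{\Gamma(C(t))}$. The reverse inclusion is the heart of the matter and is where I expect the main difficulty: one must produce a countable family $\{f_k\}_{k\in\bbN}\subset\dom(\bsC)$ whose fibers $\{f_k(t)\}$ are dense (in the graph norm of $C(t)$) in $\dom(A(t))\cap\dom(B(t))$ for a.e.\ $t$. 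This requires a measurable cross-section argument: first select, measurably in $t$, a sequence dense in the common domain — the subtlety being that the decomposable dense ranges of $(|A(t)|^2+I_\cH)^{-1}$ and $(|B(t)|^2+I_\cH)^{-1}$ furnished by the $N$-measurability of $\{A(t)\}$ and $\{B(t)\}$ lie in the individual domains but not obviously in their intersection — and then truncate by intersecting with sets of the form $\{t\in[-n,n] : \|A(t)v(t)\|_\cH + \|B(t)v(t)\|_\cH \le M_n\}$ for suitable $M_n\uparrow\infty$, so as to force membership in $L^2(\bbR;\cH)$ together with the constraints of \eqref{A.17}. Verifying that these truncated sections still approximate the prescribed fiber vectors in graph norm for a.e.\ $t$ is the technical bottleneck; the scalar measurability of all quantities involved is guaranteed by \eqref{A.14aa} and Remark \ref{rA.2}.

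With the fibers identified, the conclusions follow mechanically. For density, $\dom(\bsC)$ is dense in $L^2(\bbR;\cH)$ iff $\int_\bbR^\oplus D(t)\,dt = L^2(\bbR;\cH)$, i.e.\ iff $D(t)=\cH$ for a.e.\ $t$, which is exactly density of $\dom(A(t))\cap\dom(B(t))$ in $\cH$ a.e. For closability, $\bsC$ is closable iff $\ol{\Gamma(\bsC)} = \int_\bbR^\oplus \ol{\Gamma(C(t))}\,dt$ is a graph, i.e.\ contains no nonzero element $\langle 0, h\rangle$; such a global element exists iff, on a set of positive measure, $\ol{\Gamma(C(t))}$ contains a nonzero $\langle 0, h(t)\rangle$ (the nontrivial direction again by measurable selection on the exceptional set), which is precisely the failure of $C(t)$ to be closable on a positive-measure set. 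Finally, on the closable branch, $\Gamma(\ol{\bsC}) = \ol{\Gamma(\bsC)} = \int_\bbR^\oplus \ol{\Gamma(C(t))}\,dt = \int_\bbR^\oplus \Gamma(\ol{C(t)})\,dt$; since the orthogonal projection onto $\ol{\Gamma(\bsC)}$ is decomposable, its fibers are the graph projections $P(\Gamma(\ol{C(t)}))$, so $\{\ol{C(t)}\}_{t\in\bbR}$ is $N$-measurable and $\ol{\bsC} = \int_\bbR^\oplus \ol{C(t)}\,dt$, as claimed (Theorem \ref{tA.6} then applies to the resulting decomposable operator). The product case $(ii)$ runs along identical lines, with $\dom(\bsA\bsB)=\{f\in\dom(\bsB): \bsB f\in\dom(\bsA)\}$ and fiber condition $f(t)\in\dom(A(t)B(t))$; the only change is that the truncation must simultaneously control $\|B(t)v(t)\|_\cH$ and $\|A(t)B(t)v(t)\|_\cH$.
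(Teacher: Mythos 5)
There is a preliminary point about the comparison: the paper itself offers \emph{no} proof of this statement --- Theorem \ref{lA.7} is quoted from Lennon \cite{Le74} as background material --- so your proposal can only be judged on its internal completeness. Its architecture ($L^\infty(\bbR)$-invariance of $\dom(\bsA+\bsB)$ and $\Gamma(\bsA+\bsB)$, decomposability of the closures, identification of the fibers, then reading off density, closability, the closure formula and $N$-measurability) is indeed the standard reduction-theoretic route. However, the proposal has a genuine gap at exactly the step you yourself label ``the technical bottleneck,'' and that step is not a removable technicality: it is the theorem. To prove the inclusion $\ol{\Gamma(A(t)+B(t))}\subseteq L(t)$ for a.e.\ $t$ --- and already to prove the single implication that a.e.\ density of $\dom(A(t))\cap\dom(B(t))$ forces density of $\dom(\bsA)\cap\dom(\bsB)$ --- one must exhibit countably many elements of $\dom(\bsA)\cap\dom(\bsB)$ whose fibers are dense in $\dom(A(t))\cap\dom(B(t))$ in the joint graph norm for a.e.\ $t$. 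You correctly observe that the dense sets furnished by $N$-measurability, namely the ranges of $\big(|A(t)|^2+I_{\cH}\big)^{-1}$ and $\big(|B(t)|^2+I_{\cH}\big)^{-1}$, lie in the two separate domains rather than in their intersection, and you then simply posit that one can ``select, measurably in $t$, a sequence dense in the common domain.'' No such selection is constructed; the truncation by sets of the form $\{t\in[-n,n] \,|\, \|A(t)v(t)\|_{\cH}+\|B(t)v(t)\|_{\cH}\le M_n\}$ only repairs square-integrability \emph{after} a measurable graph-dense family $v(\cdot)$ exists. As written, the argument assumes precisely what has to be proved, and the nontrivial direction of each equivalence, the $N$-measurability of the closures, and the closure formula all rest on this unproven step.

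The missing idea can be supplied concretely, and it is worth recording how. Work in $\cH\oplus\cH\oplus\cH$ and set $G_A(t)=\Gamma(A(t))\oplus\cH$ and $G_B(t)=\sigma\big(\Gamma(B(t))\oplus\cH\big)$, where $\sigma$ interchanges the last two coordinates; both projection families are weakly measurable because the characteristic matrices of $A(t)$ and $B(t)$ are. Then $G_A(t)\cap G_B(t)=\{\langle v, A(t)v, B(t)v\rangle \,|\, v\in\dom(A(t))\cap\dom(B(t))\}$, and von Neumann's alternating projection theorem gives $P\big(G_A(t)\cap G_B(t)\big)=\slim_{n\to\infty}\big(P(G_A(t))\,P(G_B(t))\big)^n$, which is again weakly measurable, since products and pointwise limits of weakly measurable bounded families are weakly measurable (this fact is used in the paper's proof of Theorem \ref{tA.7}). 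Applying these projections to a countable dense subset of $\cH\oplus\cH\oplus\cH$ yields measurable sections $v_j(t)$, $A(t)v_j(t)$, $B(t)v_j(t)$ such that $\{v_j(t)\}_{j\in\bbN}$ is graph-dense in $\dom(A(t))\cap\dom(B(t))$ for every $t$; your truncation then legitimately produces elements $\chi_{E_{j,n}}v_j\in\dom(\bsA)\cap\dom(\bsB)$ whose fibers are graph-dense a.e., and the remainder of your argument (fiber identification, density, closability via the fiberwise graph criterion, and the closure formula) goes through. For part $(ii)$ one uses instead $\big(\Gamma(B(t))\oplus\cH\big)\cap\big(\cH\oplus\Gamma(A(t))\big)=\{\langle v, B(t)v, A(t)B(t)v\rangle \,|\, v\in\dom(A(t)B(t))\}$, after which the same selection-plus-truncation scheme applies.
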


We continue this discussion by reconsidering the special self-adjoint case in more
detail which has important applications to periodic Schr\"odinger (resp., Jacobi, Dirac, 
CMV, etc.) operators (cf.\ \cite[Chs.\ 3, 4]{Ku93}, ]\cite[Sect.\ XIII.16]{RS78}) and to random Hamiltonians in condensed matter physics (cf.\ \cite[Ch.\ V, VII--IX ]{CL90}, 
\cite[Chs.\ I, II, VI]{PF92}).

In Theorem \ref{tA.7} below, $E_T(\lambda)$, $\lambda \in \bbR$, denotes the family of strongly right continuos spectral projections associated with the self-adjoint operator $T$ in $\cH$. 

\begin{theorem} \lb{tA.7}
Let $T(t)$, $t\in\bbR$, be self-adjoint operators in $\cH$. Then the following items
$(i)$--$(v)$ are equivalent: \\
$(i)$ The family $\{T(t)\}_{t\in\bbR}$ is $N$-measurable. \\
$(ii)$ For some $($and hence for all\,$)$ $z_0\in\bbC\backslash\bbR$,
$\{(T(t) - z_0 I_{\cH})^{-1}\}_{t\in\bbR}$ is weakly measurable. \\
$(iii)$ For all $\lambda \in \bbR$, $\{E_{T(t)}(\lambda)\}_{t\in\bbR}$ is weakly
measurable. \\
$(iv)$ For all $s \in \bbR$, $\{e^{i s T(t)}\}_{t\in\bbR}$ is weakly measurable. \\
$(v)$ For all $F \in L^\infty(\bbR; dx)$, $\{F(T(t))\}_{t\in\bbR}$ is weakly measurable.
\end{theorem}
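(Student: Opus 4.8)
The plan is to route every implication through condition $(ii)$, using as the organizing device the class of bounded Borel functions that yield weakly measurable families. Before anything else I would record three stability facts. By Remark \ref{rA.2}$(i)$ and separability of $\cH$, weak measurability of a family $\{A(t)\}_{t\in\bbR}\subset\cB(\cH)$ is the same as strong measurability of $\{A(t)g\}_{t\in\bbR}$ for each $g\in\cH$; from this one obtains (a) finite linear combinations and adjoints of weakly measurable bounded families are weakly measurable; (b) the pointwise product $\{A(t)B(t)\}_{t\in\bbR}$ of two such families is weakly measurable (approximate $B(t)g$ by countably-valued functions $g_n(t)$ and use the fiberwise bound $\|A(t)(B(t)g-g_n(t))\|_{\cH}\le\|A(t)\|_{\cB(\cH)}\|B(t)g-g_n(t)\|_{\cH}$); and (c) operator-norm limits uniform in $t$, as well as bounded strong limits pointwise in $t$, preserve weak measurability.

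For $(i)\Leftrightarrow(ii)$ at $z_0=i$, I use that $T(t)=T(t)^*$ reduces $N$-measurability, by Remark \ref{rA.2}$(iii)$, to weak measurability of $\{(T(t)^2+I_{\cH})^{-1}\}_{t\in\bbR}$ and $\{T(t)(T(t)^2+I_{\cH})^{-1}\}_{t\in\bbR}$, and that these are linked to the resolvents at $\pm i$ by
\begin{align}
& (T-iI_{\cH})^{-1} = T(T^2+I_{\cH})^{-1} + i(T^2+I_{\cH})^{-1},  \no\\
& T(T^2+I_{\cH})^{-1} = \tfrac{1}{2}\big[(T-iI_{\cH})^{-1} + (T+iI_{\cH})^{-1}\big],  \no\\
& (T^2+I_{\cH})^{-1} = \tfrac{1}{2i}\big[(T-iI_{\cH})^{-1} - (T+iI_{\cH})^{-1}\big],  \no
\end{align}
together with $(T+iI_{\cH})^{-1}=[(T-iI_{\cH})^{-1}]^*$, so that (a) gives the equivalence. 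To reach the remaining conditions (and to handle an arbitrary $z_0$) I set $\cM=\{F:\bbR\to\bbC \text{ bounded Borel}\mid \{F(T(t))\}_{t\in\bbR}\text{ weakly measurable}\}$. Using $F(T)G(T)=(FG)(T)$, $\overline{F}(T)=F(T)^*$ and (a)--(c), the class $\cM$ is a conjugation-closed algebra closed under uniform limits; it is also closed under bounded pointwise limits, since for $F_n\to F$ with $\sup_n\|F_n\|_\infty<\infty$ the bounded convergence theorem applied to the scalar spectral measure gives
\[
\|F_n(T(t))g - F(T(t))g\|_{\cH}^2 = \int_{\bbR} |F_n(\lambda) - F(\lambda)|^2 \, d(g, E_{T(t)}(\lambda) g)_{\cH} \to 0
\]
as $n\to\infty$, for each fixed $t$ and $g$, whence (c) applies. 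Starting from $(ii)$ at a general $z_0\in\bbC\backslash\bbR$, both $(x-z_0)^{-1}$ and (by adjoint) $(x-\overline{z_0})^{-1}$ lie in $\cM$; as these separate the points of $\bbR\cup\{\infty\}$ and vanish at $\infty$, Stone--Weierstrass places $C_0(\bbR)\subset\cM$, cutting off by $C_0$ bump functions and passing to bounded pointwise limits gives $C_b(\bbR)\subset\cM$, and a functional monotone class argument (multiplicative generating class $C_b(\bbR)$ generating the Borel $\sigma$-algebra) yields that $\cM$ is \emph{all} bounded Borel functions, i.e. $(v)$ (any Borel representative of an $L^\infty(\bbR;dx)$-class works). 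Since $(x^2+1)^{-1},\,x(x^2+1)^{-1}\in C_0(\bbR)\subset\cM$, this also yields $(ii)\Rightarrow(i)$ for every $z_0$.

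It then remains to close the loop. The implications $(v)\Rightarrow(iii)$ and $(v)\Rightarrow(iv)$ are immediate specializations, $\chi_{(-\infty,\lambda]}$ and $e^{is\,\cdot}$ being bounded Borel. To return to $(ii)$: from $(iv)$ I recover the resolvent for $\Im(z)<0$ via the Laplace transform $(T(t)-zI_{\cH})^{-1}=-i\int_0^\infty e^{isT(t)}e^{-isz}\,ds$, the integrand being jointly measurable in $(s,t)$ (separately measurable in $t$, strongly continuous in $s$), so Fubini makes $t\mapsto(h,(T(t)-zI_{\cH})^{-1}g)_{\cH}$ measurable; from $(iii)$ I approximate $(x-z_0)^{-1}\in C_0(\bbR)$ uniformly by step functions $\sum_k c_k\chi_{(a_k,b_k]}$, whose images $\sum_k c_k[E_{T(t)}(b_k)-E_{T(t)}(a_k)]$ are weakly measurable, and invoke (c). This completes the cycle $(i)\Leftrightarrow(ii)\Rightarrow(v)\Rightarrow(iii),(iv)\Rightarrow(ii)$.

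I expect the technical heart — and the step needing the most care — to be verifying that $\cM$ is genuinely closed under the two operations that power the monotone class argument, namely products and bounded pointwise limits; this rests squarely on the weak-equals-strong measurability bridge of Remark \ref{rA.2}$(i)$ (via Pettis and separability) and on the fiberwise bounded convergence theorem for the scalar spectral measures $d(g,E_{T(t)}(\cdot)g)_{\cH}$. A secondary technical point is the joint $(s,t)$-measurability justifying Fubini in the recovery of the resolvent from the unitary group in $(iv)\Rightarrow(ii)$.
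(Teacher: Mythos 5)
Your proposal is correct, and its treatment of $(i)\Leftrightarrow(ii)$ is essentially the paper's: both reduce $N$-measurability via Remark \ref{rA.2}\,$(iii)$/\eqref{A.14A} to weak measurability of $\big\{\big(T(t)^2+I_{\cH}\big)^{-1}\big\}_{t\in\bbR}$ and $\big\{T(t)\big(T(t)^2+I_{\cH}\big)^{-1}\big\}_{t\in\bbR}$, and then pass to the resolvents at $\pm i$ by elementary algebraic identities (the paper's \eqref{A.30}; you use partial fractions plus the adjoint relation $(T+iI_{\cH})^{-1}=[(T-iI_{\cH})^{-1}]^*$ instead of the product formula --- an immaterial difference). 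Where you genuinely diverge is in linking $(ii)$ to $(iii)$, $(iv)$, $(v)$. The paper runs the classical chain from random-operator theory: the Laplace transform recovers the resolvent from the unitary group, the Fourier--Stieltjes integral recovers the unitary group from the spectral family, Stone's formula (a double limit of integrated resolvent differences) recovers the spectral projections from the resolvent, and $(v)$ is then handled by an ad hoc step-function approximation $F_{n,\pm}\to F_{\pm}$. You instead make $(v)$ the hub: the class $\cM$ of bounded Borel functions $F$ for which $\{F(T(t))\}_{t\in\bbR}$ is weakly measurable is a conjugation-closed algebra, closed under uniform limits and under bounded pointwise limits (via fiberwise dominated convergence for the scalar spectral measures $d(g,E_{T(t)}(\cdot)g)_{\cH}$), so Stone--Weierstrass plus a functional monotone class argument upgrades membership of $(x-z_0)^{-1}$ and $(x-\overline{z_0})^{-1}$ to all bounded Borel functions; $(iii)$ and $(iv)$ then drop out as specializations, and you close the cycle with the same Laplace-transform step as the paper for $(iv)\Rightarrow(ii)$, and a uniform step-function approximation of $(x-z_0)^{-1}$ for $(iii)\Rightarrow(ii)$. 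Your route is more systematic --- it avoids Stone's formula entirely, proves $(v)$ in one stroke, and immediately yields the ``for some, hence for all $z_0$'' clause --- at the price of heavier abstract machinery; the paper's route is more explicit at the level of formulas and stays close to the random-Hamiltonian literature it cites. The only delicate points in your argument (joint $(s,t)$-measurability justifying Fubini, and the product/limit stability of $\cM$, which rests on weak-equals-strong measurability in the separable space $\cH$) are exactly the ones you flag, and they go through as you indicate.
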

\begin{proof}
Since by hypothesis, $T(t) = T(t)^*$, $t\in\bbR$, equation \eqref{A.14A} yields that 
\begin{align}
\begin{split}
& \text{$N$-measurability of $\{T(t)\}_{t\in\bbR}$ is equivalent to weak measurability of} 
\\
& \quad \big\{\big(T(t)^2 + I_{\cH}\big)^{-1}\big\}_{t\in\bbR} \, \text{ and } \, 
\big\{T(t) \big(T(t)^2 + I_{\cH}\big)^{-1}\big\}_{t\in\bbR}.    \lb{A.29} 
\end{split} 
\end{align}
To show the equivalence of items $(i)$ and $(ii)$ we will employ the identities
\begin{align}
\big(T(t)^2 + I_{\cH}\big)^{-1} &= (T(t) - i I_{\cH})^{-1} (T(t) + i I_{\cH})^{-1},  \no \\
T(t)  \big(T(t)^2 + I_{\cH}\big)^{-1} &= (T(t) - i I_{\cH})^{-1} - i \big(T(t)^2 + I_{\cH}\big)^{-1} \lb{A.30} \\
& = (T(t) + i I_{\cH})^{-1} + i \big(T(t)^2 + I_{\cH}\big)^{-1}.  \no
\end{align}
Assuming that the family $\{T(t)\}_{t\in\bbR}$ is $N$-measurable, the identity
\begin{equation}
(T(t) \pm i I_{\cH})^{-1} = T(t)  \big(T(t)^2 + I_{\cH}\big)^{-1} 
\mp i  \big(T(t)^2 + I_{\cH}\big)^{-1},
\end{equation}
then proves that the families $\{(T(t) \pm i I_{\cH})^{-1}\}_{t\in\bbR}$ are weakly measurable. Using the resolvent equation for $T(t)$ and the fact that by \eqref{A.14b},  products of families of weakly measurable bounded operators are weakly measurable, one obtains that for all $z \in \bbC\backslash\bbR$, the family 
$\{(T(t) - z I_{\cH})^{-1}\}_{t\in\bbR}$ is weakly measurable. In particular, this proves 
that $(i)$ implies $(ii)$. Using once more the fact that products of families 
of weakly measurable bounded operators are weakly measurable, combining 
\eqref{A.29} and \eqref{A.30} immediately yields that $(ii)$ implies $(i)$.

The equivalence of
items $(iii)$, $(iv)$, and $(v)$ with item $(ii)$ is familiar from the theory of random
Hamiltonians (cf., e.g., \cite[Sect.\ 5.1]{CL90} and \cite{KM82})
and follows from the facts that for all $f, g \in \cH$, $t\in\bbR$,
\begin{align}
& \big(f, e^{i s T(t)} g\big)_{\cH} = \int_{\bbR} e^{i s \lambda} d(f, E_{T(t)} (\lambda) g)_{\cH},
\quad s\in\bbR,   \\
& \big(f, (T(t) -z I_{\cH})^{-1} g\big)_{\cH}
= \pm i \int_0^\infty e^{\pm i z s} \big(f, e^{\mp i s T(t)} g\big)_{\cH}\, ds, 
\quad \pm \Im (z) > 0, \\
& (f, E_{T(t)} (\lambda) g)_{\cH} = 
\lim_{\delta \downarrow 0} \lim_{\varepsilon \downarrow 0}
\int_{-\infty}^{\lambda + \delta} \big(f,\big[(H(t) - (\lambda' + i \varepsilon) I_{\cH})^{-1}   \\
& \hspace*{5.1cm} - (H(t) - (\lambda' - i \varepsilon) I_{\cH})^{-1}\big] g\big)_{\cH} 
\, d \lambda',
\quad \lambda \in \bbR,    \no
\end{align}
and the fact that $F_{\pm} = (|F| \pm F)/2$ (away from a set of Lebesgue
measure zero) is the limit of appropriate step functions $F_{n,\pm}$, $n\in\bbN$,
in the $\|\cdot\|_{\infty}$-norm, with $0 \leq F_{n,\pm} \leq F_{\pm}$. Finally, one
uses the fact that pointwise limits of measurable functions are measurable.
\end{proof}

The equivalence of items $(i)$ and $(ii)$ in Theorem \ref{tA.7} is of course
well-known and used, for instance, in \cite[Sect.\ XIII.16]{RS78} to define
measurability of $\{T(t)\}_{t\in\bbR}$ in the self-adjoint case.

We conclude with the following elementary yet useful result (cf., e.g.,
\cite[Sect.\ 5.1]{CL90} and \cite{KM82}):

\begin{lemma} \lb{lA.8}
Let $T(t)$ and $T_n(t)$, $n\in\bbN$, be self-adjoint operators in $\cH$ for each
$t\in\bbR$. In addition, suppose that $\{T_n(t)\}_{t\in\bbR}$ is $N$-measurable
in $\cH$ for each $n\in\bbN$, and that for a.e.\ $t \in \bbR$, $T_n (t)$ converge in
the weak $($and hence strong\,$)$ resolvent sense to $T(t)$ as $n\to \infty$. Then 
the  family $\{T(t)\}_{t\in\bbR}$ is $N$-measurable in $\cH$.
\end{lemma}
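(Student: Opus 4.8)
The plan is to reduce the claim to the weak measurability of a single resolvent family by means of the equivalence of items $(i)$ and $(ii)$ in Theorem~\ref{tA.7}. Fix $z_0 = i$. Since each family $\{T_n(t)\}_{t\in\bbR}$ is $N$-measurable by hypothesis, Theorem~\ref{tA.7} guarantees that each family of bounded operators $\{(T_n(t) - i I_{\cH})^{-1}\}_{t\in\bbR}$ is weakly measurable. By the same theorem it therefore suffices to prove that the limiting family $\{(T(t) - i I_{\cH})^{-1}\}_{t\in\bbR}$ is weakly measurable, for then $\{T(t)\}_{t\in\bbR}$ is $N$-measurable.

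Next I would exploit the resolvent convergence. By hypothesis there is a null set $\cE \subset \bbR$ such that for all $t \in \bbR \backslash \cE$ the operators $T_n(t)$ converge to $T(t)$ in the weak resolvent sense, which gives
\begin{equation*}
\big(g, (T(t) - i I_{\cH})^{-1} f\big)_{\cH}
= \lim_{n\to\infty} \big(g, (T_n(t) - i I_{\cH})^{-1} f\big)_{\cH},
\quad f, g \in \cH, \; t \in \bbR \backslash \cE.
\end{equation*}
For each fixed $f, g \in \cH$ and each $n\in\bbN$, the map $t \mapsto (g, (T_n(t) - i I_{\cH})^{-1} f)_{\cH}$ is $($Lebesgue$)$ measurable by \eqref{A.14b}, since the bounded family $\{(T_n(t) - i I_{\cH})^{-1}\}_{t\in\bbR}$ is weakly measurable. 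As a pointwise (a.e.) limit of measurable scalar functions is measurable, the function $t \mapsto (g, (T(t) - i I_{\cH})^{-1} f)_{\cH}$ is measurable for every $f, g \in \cH$.

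Finally, invoking \eqref{A.14b} once more, the measurability just established for all $f, g \in \cH$ is exactly the assertion that the bounded family $\{(T(t) - i I_{\cH})^{-1}\}_{t\in\bbR}$ is weakly measurable; by the equivalence of $(i)$ and $(ii)$ in Theorem~\ref{tA.7} this yields the $N$-measurability of $\{T(t)\}_{t\in\bbR}$, as desired. I do not anticipate a serious obstacle here; the one point requiring a little care is to ensure that measurability need only be tested against vectors $f, g$ ranging over a fixed countable dense subset of $\cH$ (which is legitimate by separability, cf.\ Remark~\ref{rA.2}\,$(i)$), so that the single exceptional null set $\cE$ suffices simultaneously for all the relevant scalar functions.
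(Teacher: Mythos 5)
Your proof is correct and follows essentially the same route as the paper's: reduce via the equivalence of items $(i)$ and $(ii)$ in Theorem~\ref{tA.7} to weak measurability of the resolvent family, then pass to the a.e.\ pointwise limit of the measurable functions $t \mapsto (g, (T_n(t) - i I_{\cH})^{-1} f)_{\cH}$ (the paper notes this last step uses completeness of Lebesgue measure). Your closing caveat about a countable dense set is harmless but unnecessary, since the hypothesis already provides a single null set off which weak resolvent convergence holds for \emph{all} $f, g \in \cH$ simultaneously.
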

\begin{proof}
This follows from Theorem \ref{tA.7}\,$(i), (ii)$ and
\begin{equation}
\big(f, (T(t) -z I_{\cH})^{-1} g\big)_{\cH} 
= \lim_{n\to\infty} \big(f, (T_n(t) -z I_{\cH})^{-1} g\big)_{\cH}
\end{equation}
for all $f, g \in \cH$, and a.e.\ $t\in\bbR$, using the fact that the a.e.\ limit of measurable functions is measurable (using the completeness of the Lebesgue 
measure).
\end{proof}

\section{The Negative Answer to Nusssbaum's Question}  \lb{s4}

On p.\ 36 in his paper \cite{Nu64}, Nussbaum asks whether the converse 
implication holds in \eqref{A.14a}. Given the implication in \eqref{A.14a}, this 
amounts of course to asking whether there is actually equivalence in \eqref{A.14a}. Explicitly, this then reads as follows: 
\begin{equation}
\text{$\bullet$ Is $N$-measurability of $\{T(t)\}_{t\in\bbR}$ equivalent to 
weak measurability of $\{T(t)\}_{t\in\bbR}$\,?}    \lb{4.1}
\end{equation}
As described in \eqref{A.14c}, Nussbaum \cite{Nu64} 
proved the equivalence in \eqref{4.1} in the case of bounded operators 
$T(t) \in \cB(\cH)$, $t\in\bbR$, and, as mentioned in Remark \ref{rA.2}\,$(ii)$, he also proved \eqref{A.14a}, and hence that $N$-measurability 
of $\{T(t)\}_{t\in\bbR}$ implies weak measurability of $\{T(t)\}_{t\in\bbR}$. However, 
in the case of unbounded operators $T(t)$, the converse of this fact, and hence the question \eqref{4.1}, appears to have been open for about 45 years now.

The principal purpose of this section is to answer Nussbaum's question and 
consider closely related questions of this type. In fact, we will provide several 
counterexamples with slightly varying degree of sophistication. In addition, we also derive some concrete scenarios in which $N$-measurability can be verified. 
 
The following simple (and most likely, simplest) counterexample answers Nussbaum's question in the negative and hence demonstrates an interesting distinction between operators in $L^2(\bbR; \cH)$ defined as in \eqref{A.17}, and direct integrals as in 
\eqref{A.18}:

\begin{example}  \lb{e4.1}
Let $T_0$ and $T_1$ be densely defined, closed, unbounded, symmetric operators in $\cH$ satisfying
\begin{equation}
T_0 \subsetneq T_1.   \lb{4.2}
\end{equation}
Let $\gE\subset \bbR$ be a nonmeasurable subset of $\bbR$ 
$($in the sense of Lebesgue measure$)$ and introduce the linear operators
\begin{equation}
\wti T(t) = \begin{cases} T_0, & t \in\gE,  \\
T_1, & t \in \bbR\backslash\gE,
\end{cases}      \lb{4.3}
\end{equation}
in $\cH$. Then the family $\big\{\wti T(t)\big\}_{t\in\bbR}$ is weakly measurable, but not
$N$-measurable.
\end{example}
\begin{proof}
Let $\{f(t)\}_{t\in\bbR}$ be a (weakly) measurable family of elements  in $\cH$ such that
$f(t) \in \dom\big(\wti T(t)\big)$ for all $t\in\bbR$. Then, using the fact that
\begin{equation}
T_0 \subset T_1 \subseteq T_1^* \subset T_0^*,     \lb{4.4}
\end{equation}
one concludes that
\begin{equation}
\big(g, \wti T(t) f(t) \big)_{\cH} = (T_0 g, f(t))_{\cH}, \quad t\in\bbR, \;
g \in \dom(T_0),  \lb{4.5}
\end{equation}
is measurable, and since $\dom(T_0)$ is dense in $\cH$, the family
$\big\{\wti T(t)\big\}_{t\in\bbR}$ is weakly measurable by Remark \ref{rA.2}\,$(i)$.

Since by hypothesis, $T_0 \subsetneq T_1$, one concludes that
\begin{equation}
T_0^* T_0 \neq T_1^* T_1.     \lb{4.5a}
\end{equation}
Arguing by contradiction, the equality $T_0^* T_0 = T_1^* T_1$ would imply
$\dom(T_0^* T_0) = \dom(T_1^* T_1)$ and hence
\begin{equation}
\dom(T_0) = \dom(|T_0|) = \dom(|T_1|) = \dom(T_1),   \lb{4.10}
\end{equation}
where we used $\dom(T) = \dom(|T|)$ for any densely defined, closed operator $T$
in $\cH$ and \cite[Theorem\ 9.4(b)]{We80} (a consequence of Heinz's inequality) to obtain $\dom(|T_0|) = \dom(|T_1|)$. However, \eqref{4.10} contradicts hypothesis
\eqref{4.2}.

In addition, there exists $0 \neq h \in \cH$ such that
\begin{equation}
\big(h, (T_0^* T_0 + I_{\cH})^{-1} h\big)_{\cH}
\neq \big(h, (T_1^* T_1 + I_{\cH})^{-1} h\big)_{\cH}.   \lb{4.6}
\end{equation}
Indeed, if no such $0 \neq h \in \cH$ existed, that is, if
\begin{equation}
\big(k, (T_0^* T_0 + I_{\cH})^{-1} k\big)_{\cH}
= \big(k, (T_1^* T_1 + I_{\cH})^{-1} k\big)_{\cH} \, \text{ for all } \, k \in\cH,    \lb{4.7}
\end{equation}
one arrives at
\begin{equation}
\big(k_1, (T_0^* T_0 + I_{\cH})^{-1} k_2\big)_{\cH}
= \big(k_1, (T_1^* T_1 + I_{\cH})^{-1} k_2\big)_{\cH} \, \text{ for all } \, k_1, k_2 \in\cH,
\lb{4.8}
\end{equation}
since for any densely defined, closed, linear operator $T$ in $\cH$, the sequilinear form
$\big(k_1, (T^* T + I_{\cH})^{-1} k_2\big)_{\cH}$, $k_1, k_2 \in \cH$, satisfies
\begin{align}
\big(k_1, (T^* T + I_{\cH})^{-1} k_2\big)_{\cH}
& = \f{1}{4}\big[\big((k_1+ k_2), (T^* T + I_{\cH})^{-1} (k_1+ k_2)\big)_{\cH}  \no \\
& \quad - \big((k_1- k_2), (T^* T + I_{\cH})^{-1} (k_1- k_2)\big)_{\cH}    \no \\
& \quad + i \big((k_1- i k_2), (T^* T + I_{\cH})^{-1} (k_1- i k_2)\big)_{\cH}      \lb{4.9} \\
& \quad - i \big((k_1+ i k_2), (T^* T + I_{\cH})^{-1} (k_1+ i k_2)\big)_{\cH}\big],  \no  \\
& \hspace*{5.35cm} k_1, k_2 \in\cH.    \no
\end{align}
Equation \eqref{4.8} implies $T_0^* T_0 = T_1^* T_1$, again contradicting hypothesis
\eqref{4.2}. Thus, one arrives at the validity of \eqref{4.6}.

Since nonmeasurability of $\gE$ is equivalent to nonmeasurability of its characteristic function $\chi_{\gE}$, one similarly infers that
\begin{equation}
\big(h, \big(\big(\wti T(t)\big)^* \wti T(t) + I_{\cH}\big)^{-1} h\big)_{\cH}
= \begin{cases}
\big(h, (T_0^* T_0 + I_{\cH})^{-1} h\big)_{\cH}, & t \in \gE, \\
\big(h, (T_1^* T_1 + I_{\cH})^{-1} h\big)_{\cH}, & t \in \bbR\backslash\gE,
\end{cases}      \lb{4.11}
\end{equation}
is nonmeasurable, implying that the family $\big\{\wti T(t)\big\}_{t\in\bbR}$ is not
$N$-measurable by equation \eqref{A.14A}.
\end{proof}

An elementary concrete situation, illustrating \eqref{4.2}, \eqref{4.4}, and
\eqref{4.5a}, is obtained as follows: 
\begin{example} \lb{e.4.1a}
Given $\cH=L^2((0,1); dx)$, one introduces
\begin{align}
& T_0 = \f{1}{i} \f{d}{dx}, \quad \dom(T_0) = \big\{f \in L^2((0,1); dx) \,\big|\, f \in AC([0,1]); \,
f(0) = f(1) = 0; \no \\
& \hspace*{8.2cm} f' \in L^2((0,1); dx)\big\},   \lb{4.13} \\
& T_1 = \f{1}{i} \f{d}{dx}, \quad \dom(T_1) = \big\{f \in L^2((0,1); dx) \,\big|\, f \in AC([0,1]); \,
f(0) = f(1);   \no \\
& \hspace*{8.2cm} f' \in L^2((0,1); dx)\big\},    \lb{4.14}
\end{align}
where $AC([0,1])$ denotes the set of absolutely continuous functions on $[0,1]$.
Then $T_0$ is densely defined, closed, symmetric, with deficiency indices $(1,1)$,
and $T_1$ is self-adjoint,
\begin{equation}
T_0 \subsetneq T_1 = T_1^* \subsetneq T_0^*,   \lb{4.15}
\end{equation}
where
\begin{equation}
T_0^* = \f{1}{i} \f{d}{dx}, \quad \dom(T_0^*) = \big\{f \in L^2((0,1); dx) \,\big|\, f \in AC([0,1]); \,
 f' \in L^2((0,1); dx)\big\}.   \lb{4.16} \\
\end{equation}
$($See, e.g., \cite[p.\ 141--142]{RS75} for a discussion of these facts.$)$ 
Moreover, one verifies
\begin{align}
& T_0^* T_0 = - \f{d^2}{dx^2}, \quad \dom(T_0^* T_0) = \big\{f \in L^2((0,1); dx) \,\big|\,
f, f' \in AC([0,1]);   \no \\
& \hspace*{4.6cm} f(0) = f(1) = 0; \, f', f'' \in L^2((0,1); dx)\big\},    \lb{4.17} \\
& T_1^* T_1 = T_1^2 = - \f{d^2}{dx^2}, \quad \dom(T_1^* T_1) 
= \big\{f \in L^2((0,1); dx) \,\big|\,
f, f' \in AC([0,1]);   \no \\
& \hspace*{3.2cm} f(0) = f(1), f'(0)= f'(1); \, f', f'' \in L^2((0,1); dx)\big\},    \lb{4.18}
\end{align}
that is, $T_0^*T_0 \geq \pi^2 I_{L^2((0,1); dx)}$ represents the $($self-adjoint\,$)$ 
Dirichlet Laplacian on $[0,1]$, whereas $T_1^* T_1\geq 0$ represents the 
$($self-adjoint\,$)$ periodic Laplacian on $[0,1]$.
\end{example}

Next, recalling the fact that by Remark \ref{rA.2}\,$(iv)$, $N$-measurability 
of $\{T(t)\}_{t\in \bbR}$ is equivalent to $N$-measurability of 
$\{T(t)^*\}_{t\in \bbR}$, we now address the following natural question: 
\begin{align}
\begin{split}
& \text{$\bullet$ Is weak measurability of $\{T(t)\}_{t\in \bbR}$ equivalent to weak 
measurability} \\
& \;\;\; \text{of $\{T(t)^*\}_{t\in \bbR}$\,?} 
\end{split}
\end{align}
We will show that the answer to this question is also negative (and hence independently answering Nussbaum's question in the negative once again).

Let $A_0$ be a densely defined, closed, symmetric operator in
$\cH$ with deficiency indices $(1,1)$.  Let $A_1$ be a self-adjoint
extension of $A_0$ in $\cH$. Define the operators $T_0$ and $T_1$ by
\begin{equation}
T_0=A_0+i I_{\cH}, \quad T_1=A_1+i I_{\cH}, 
\quad \dom(T_j)=\dom(A_j), \; j=0,1.  \lb{4.20} 
\end{equation} 
We note that $\ker (T_1^*)=\{0\}$. On the other hand, there exists a
vector $e\in \dom(T_0^*)$, with $\|e\|_{\cH}=1$, such that
\begin{equation}
T_0^{*}e=0.    \lb{4.21} 
\end{equation} 
Since $T_0 \subsetneq T_1$ one obtains $T_1^* \subsetneq T_0^*$ and hence 
\begin{equation}
e \notin \dom(T_1^*).     \lb{4.21a}
\end{equation}
(Otherwise, $T_1^* e = T_0^* e = 0$ would yield a contradiction.) Define the operator $T_2$ by
\begin{equation}
T_2 f= T_1 f + (e,T_1 f)_{\cH}\, e, \quad f \in \dom(T_2)=\dom(T_1).  \lb{4.22}
\end{equation} 
Since
\begin{equation}
T_2 = K T_1, \, \text{ where } \,  K = I_{\cH} + (e,\cdot)_{\cH}\, e = K^*,    \lb{4.23}
\end{equation} 
and $K, K^{-1} \in \cB(\cH)$, $T_2$ is closed in $\cH$ and 
\begin{equation}
T_2^* = T_1^* K.     \lb{4.23a}
\end{equation} 

\begin{example} \lb{e4.2}  Let $\gE\subset \bbR$ be a nonmeasurable subset 
of $\bbR$ $($in the sense of Lebesgue measure$)$. Given the facts 
\eqref{4.20}--\eqref{4.23}, let the family $\{\wti T(t)\}_{t\in\bbR}$ of closed, densely
defined  operators in $\cH$ be given by
\begin{equation}
\wti T(t) = \begin{cases} T_1^*, & t \in\gE,  \\
T_2^*, & t \in \bbR\backslash\gE. 
\end{cases}      \lb{defnT}
\end{equation}
Then the family $\big\{\wti T(t)\big\}_{t\in\bbR}$ is  weakly
measurable, but the family of adjoint operators 
$\big\{\big(\wti T(t)\big)^*\big\}_{t\in\bbR}$ is not 
weakly measurable.
\end{example}
\begin{proof}
First one notes that
\begin{equation}
\big(\wti T(t)\big)^* = \begin{cases} T_1, & t \in\gE,  \\
T_2, & t \in \bbR\backslash\gE. 
\end{cases}      \lb{defnT*}
\end{equation}
For any $g\in \dom(T_0)$ one has 
\begin{equation}
(e,T_1 g)_{\cH} = (e,T_0 g)_{\cH} = (T_0^{*}e,g)_{\cH} = 0,
\end{equation} 
and
\begin{equation}
T_2 g = T_1 g = T_0 g.       \lb{L22}
\end{equation}
Next, let $\{f(t)\}_{t\in\bbR}$ be a (weakly) measurable family of
elements  in $\cH$ such that $f(t) \in \dom\big(\wti T(t)\big)$
for all $t\in\bbR$. Then for any $g\in \dom(T_0)$ one obtains by
\eqref{defnT*} and \eqref{L22}, that 
\begin{equation}
\big(g, \wti T(t)f(t) \big)_{\cH}=(T_0 g, f(t))_{\cH}, \quad  t\in\bbR,
\end{equation} 
so that $\big(g, \wti T(t)f(t) \big)_{\cH}$, $g\in \dom(T_0)$, is measurable.
Hence, using $\ol{\dom(T_0)}=\cH$ and Remark \ref{rA.2}\,$(i)$, 
one concludes the weak measurability of $\big\{\wti T(t)\big\}_{t\in\bbR}$.

Noting that $\dom\big(\big(\wti T(t)\big)^*\big)=\dom(T_1)$, $t\in\bbR$, and since
$\ker (T_1^*)=\{0\}$, there exists a vector $h_0\in \dom(T_1)$
such that $(e, T_1 h_0)_{\cH} \neq 0$. Then
\begin{equation}
\big(e, \big(\wti T(t)\big)^*h_0 \big)_{\cH} 
= \begin{cases} (e, T_1 h_0)_{\cH}, & t \in\gE,  \\
2 (e, T_1 h_0)_{\cH}, & t \in \bbR\backslash\gE,
\end{cases}      \lb{defnT*e}
\end{equation}
so that $\bbR\ni t \mapsto \big(e, \big(\wti T(t)\big)^* h_0 \big)_{\cH}$ is not measurable. Thus,  $\big\{\big(\wti T(t)\big)^*\big\}_{t\in \bbR}$ is not weakly measurable.
\end{proof}

\begin{remark} \lb{r.4.3} In Example \ref{e4.2} one has 
\begin{equation}
\bigcap_{t\in\bbR}\,\dom \big(\wti T(t)\big)= \dom (T_1^*)\cap \dom (T_2^*),
\end{equation} 
and
\begin{align}
\dom(T^{*}_1)& = \dom(T_1) = \dom(A_1),   \\
\dom(T^{*}_2)& = \{g\in \cH \,|\, [g+(e,g)_{\cH}\,e] \in \dom(T_1^*)\}.
\end{align}
Thus,
\begin{align}
\bigcap_{t\in\bbR}\,\dom\big(\wti T(t)\big) &= \dom(T^{*}_1)\cap
\dom(T^{*}_2)    \no \\
&= \{g \in \dom(T_1^*) \,|\, [g + (e,g)_{\cH} \, e] \in \dom(T_1^*)\}    \no \\
&= \{g \in \dom(T_1^*) \,|\, (e,g)_{\cH} \, e \in \dom(T_1^*)\}    \no \\
&= \{g\in \dom(T_1^*) \,|\, (e, g)_{\cH} = 0\}, 
\end{align} 
since $e \notin \dom(T_1^*)$ by \eqref{4.21a}. Hence, 
\begin{equation}
\codim \, \bigg(\ol{\bigcap_{t\in\bbR}\,\dom\big(\wti T(t)\big)}\bigg) = 1
\end{equation} 
and so $\bigcap_{t\in\bbR}\,\dom\big(\wti T(t)\big)$ is ``almost
 dense''. It cannot be dense, however, if one is interested in
a family $\big\{\big(\wti T(t)\big)^*\big\}_{t\in\bbR}$ that is not weakly measurable 
 as the next statement shows:
\end{remark}

\begin{lemma}\lb{l4.4}
 Suppose that the family $\{T(t)\}_{t\in \bbR}$ of closed,
densely defined operators in $\cH$ is weakly measurable. Assume
in addition, that
\begin{equation}\lb{4.dense}
\ol{\bigcap_{t\in \bbR}\,\dom(T(t))} = \cH.
\end{equation}
Then the family $\{T(t)^*\}_{t\in \bbR}$ is weakly measurable.
\end{lemma}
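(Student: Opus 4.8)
The plan is to verify weak measurability of $\{T(t)^*\}_{t\in\bbR}$ directly from the defining property in Definition \ref{dA.1}\,$(ii)$. Thus, I would fix an arbitrary weakly measurable family $\{h(t)\}_{t\in\bbR}$ in $\cH$ with $h(t)\in\dom(T(t)^*)$ for all $t\in\bbR$, and aim to show that $\{T(t)^* h(t)\}_{t\in\bbR}$ is weakly measurable in $\cH$. The key idea is that, rather than testing the scalar products $(g, T(t)^* h(t))_{\cH}$ against all of $\cH$, it suffices by Remark \ref{rA.2}\,$(i)$ to test them only against the dense subspace $\cD := \bigcap_{t\in\bbR}\dom(T(t))$, whose density in $\cH$ is precisely the extra hypothesis \eqref{4.dense}.

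The central step exploits the defining relation of the adjoint. For each fixed $g\in\cD$ one has $g\in\dom(T(t))$ for every $t\in\bbR$, so that
\begin{equation}
(g, T(t)^* h(t))_{\cH} = (T(t) g, h(t))_{\cH}, \quad t\in\bbR.
\end{equation}
Now the constant family $\{g\}_{t\in\bbR}$ is trivially weakly measurable and satisfies $g\in\dom(T(t))$ for all $t\in\bbR$; hence the assumed weak measurability of $\{T(t)\}_{t\in\bbR}$ yields that $\{T(t) g\}_{t\in\bbR}$ is weakly measurable. Since $\{h(t)\}_{t\in\bbR}$ is weakly (and thus, by separability of $\cH$, strongly) measurable, the implication \eqref{A.14aa} shows that $t\mapsto (T(t) g, h(t))_{\cH}$ is measurable, and therefore so is $t\mapsto (g, T(t)^* h(t))_{\cH}$.

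Having established measurability of $t\mapsto (g, T(t)^* h(t))_{\cH}$ for every $g$ in the dense set $\cD$, I would invoke Remark \ref{rA.2}\,$(i)$ to conclude that $\{T(t)^* h(t)\}_{t\in\bbR}$ is weakly measurable, which is exactly what Definition \ref{dA.1}\,$(ii)$ requires for the family $\{T(t)^*\}_{t\in\bbR}$. I do not expect a genuine obstacle in the computation; the only point at which the argument can fail is the passage to the dense test set, and this is guaranteed solely by \eqref{4.dense}. Without that hypothesis the displayed identity would be available only for $g$ in a common domain $\cD$ that need not be dense (as Example \ref{e4.2} together with Remark \ref{r.4.3} illustrates), and the conclusion itself would be false, so the role of \eqref{4.dense} is essential rather than technical.
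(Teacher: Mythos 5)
Your proposal is correct and follows essentially the same route as the paper's proof: fix a weakly measurable family in the domains of the adjoints, use the adjoint relation $(g, T(t)^* h(t))_{\cH} = (T(t)g, h(t))_{\cH}$ for $g$ in the common domain $\bigcap_{t\in\bbR}\dom(T(t))$, invoke \eqref{A.14aa}, and conclude via density \eqref{4.dense} and Remark \ref{rA.2}\,$(i)$. Your version is in fact slightly more careful than the paper's, since you explicitly justify that $\{T(t)g\}_{t\in\bbR}$ is weakly measurable by applying the weak measurability of $\{T(t)\}_{t\in\bbR}$ to the constant family $\{g\}_{t\in\bbR}$, a step the paper leaves implicit.
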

\begin{proof} Let $\{f(t)\}_{t\in\bbR}$ be a (weakly) measurable family of elements 
in $\cH$ such that
$f(t) \in \dom(T(t)^*)$ for all $t\in\bbR$. Then for any
$g\in \bigcap_{t\in \bbR}\,\dom(T(t))$ one has 
\begin{equation}
(g, T(t)^*f(t))_{\cH} = (T(t)g, f(t))_{\cH}, \quad t\in \bbR,
\end{equation} 
so that $\bbR\ni t \mapsto (g, T(t)^* f(t))_{\cH}$ is measurable by \eqref{A.14aa}. 
In view of \eqref{4.dense} and Remark \ref{rA.2}\,$(i)$, weak measurability
of $\{T(t)^*\}_{t\in \bbR}$ follows.
\end{proof}

Next, we show that condition \eqref{4.dense} in Lemma \ref{l4.4} can be replaced by the condition that $\boldsymbol{\cT}$ is densely defined in $L^2(\bbR; \cH)$:
 
\begin{lemma}\lb{l4.5}
 Suppose that the family $\{T(t)\}_{t\in \bbR}$ of closed,
densely defined operators in $\cH$ is weakly measurable. Assume
in addition, that $\boldsymbol{\cT}$, given by  \eqref{A.17}, is densely defined in 
$L^2(\bbR; \cH)$. Then the family $\{T(t)^*\}_{t\in \bbR}$ is weakly measurable.
\end{lemma}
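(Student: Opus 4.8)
The plan is to verify weak measurability of $\{T(t)^*\}_{t\in\bbR}$ directly from Definition \ref{dA.1}(ii): I take an arbitrary weakly measurable family $\{f(t)\}_{t\in\bbR}$ with $f(t)\in\dom(T(t)^*)$ for all $t\in\bbR$, set $v(t):=T(t)^*f(t)$, and aim to show that $\{v(t)\}_{t\in\bbR}$ is weakly measurable. The first step reproduces the computation from the proof of Lemma \ref{l4.4}, but now with test elements drawn from $\dom(\boldsymbol{\cT})$ rather than from a fixed dense subset of $\cH$. For every $g\in\dom(\boldsymbol{\cT})$ one has $g(t)\in\dom(T(t))$ and $t\mapsto T(t)g(t)$ weakly measurable, so
\[
(g(t),v(t))_{\cH}=(g(t),T(t)^*f(t))_{\cH}=(T(t)g(t),f(t))_{\cH}\quad\text{for a.e.\ }t\in\bbR
\]
is measurable by \eqref{A.14aa}. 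The difficulty, absent in Lemma \ref{l4.4}, is that the admissible test vectors $g(t)$ now vary with $t$, so I cannot simply feed a fixed $y\in\cH$ into this identity.

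The key step is to manufacture, for each fixed $y\in\cH$, a sequence in $\dom(\boldsymbol{\cT})$ converging pointwise a.e.\ to the constant direction $y$ on bounded sets. For $R>0$ the element $\chi_{(-R,R)}\,y$ lies in $L^2(\bbR;\cH)$, and since $\dom(\boldsymbol{\cT})$ is dense in $L^2(\bbR;\cH)$ by hypothesis, I can choose $g_n\in\dom(\boldsymbol{\cT})$ with $g_n\to\chi_{(-R,R)}\,y$ in $L^2(\bbR;\cH)$. Passing to a subsequence exactly as in the closedness argument leading to \eqref{A.17a} gives $g_n(t)\to\chi_{(-R,R)}(t)\,y$ in $\cH$ for a.e.\ $t\in\bbR$, and continuity of the inner product yields $(g_n(t),v(t))_{\cH}\to\chi_{(-R,R)}(t)\,(y,v(t))_{\cH}$ for a.e.\ $t$. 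Since each function $(g_n(\cdot),v(\cdot))_{\cH}$ is measurable by the first step, its a.e.\ limit $t\mapsto\chi_{(-R,R)}(t)\,(y,v(t))_{\cH}$ is measurable as well. Letting $R\to\infty$ shows that $t\mapsto(y,v(t))_{\cH}$ is measurable on all of $\bbR$, and as $y\in\cH$ was arbitrary, $\{v(t)\}_{t\in\bbR}=\{T(t)^*f(t)\}_{t\in\bbR}$ is weakly measurable, which is the desired conclusion.

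I expect the only delicate point to be the passage to pointwise a.e.\ convergence: convergence in $L^2(\bbR;\cH)$ only furnishes a.e.\ convergence along a subsequence, but that suffices, since an a.e.\ limit of measurable scalar functions is measurable by completeness of Lebesgue measure — precisely the mechanism already exploited in the excerpt to prove $\boldsymbol{\cS}$ closed. Everything else is routine: the adjoint identity $(g(t),T(t)^*f(t))_{\cH}=(T(t)g(t),f(t))_{\cH}$, the membership $\chi_{(-R,R)}\,y\in L^2(\bbR;\cH)$, and the measurability of inner products of measurable vector fields via \eqref{A.14aa}. A more structural alternative would be to invoke Lemma \ref{lA.5} to write $\boldsymbol{\cT}=\int_{\bbR}^{\oplus}\hatt T(t)\,dt$ with $\{\hatt T(t)\}_{t\in\bbR}$ $N$-measurable and $\hatt T(t)\subseteq T(t)$, to deduce from $T(t)^*\subseteq\hatt T(t)^*$ and the weak measurability of the $N$-measurable family $\{\hatt T(t)^*\}_{t\in\bbR}$ that $\{T(t)^*\}_{t\in\bbR}$ is weakly measurable; this route, however, would additionally require showing that density of $\boldsymbol{\cT}$ forces the fibers $\hatt T(t)$ to be densely defined for a.e.\ $t$ via a measurable-selection argument — a point closely tied to the open problem noted after Lemma \ref{lA.5} — so I prefer the self-contained approximation argument above.
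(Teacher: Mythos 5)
Your proof is correct and is essentially the paper's own argument: both approximate $\chi_{I}\,y$ (for a fixed $y\in\cH$ and a bounded interval $I$) by elements of $\dom(\boldsymbol{\cT})$, pass to a subsequence converging a.e., use the adjoint identity $(g(t),T(t)^*f(t))_{\cH}=(T(t)g(t),f(t))_{\cH}$ together with \eqref{A.14aa}, and conclude via measurability of a.e.\ limits before exhausting $\bbR$ by bounded intervals. The only differences are cosmetic (intervals $(-R,R)$ versus $[-n,n]$), so no further comparison is needed.
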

\begin{proof} 
Let $\{f(t)\}_{t\in\bbR}$ be a (weakly) measurable family of elements 
in $\cH$ such that $f(t) \in \dom( T(t)^*)$ for all $t\in\bbR$. Consider $g \in \cH$, 
$n\in\bbN$, and 
\begin{equation}
\wti g_n\in L^2(\bbR; \cH) \, \text{ with } \, \wti g_n (t) = \chi_{[-n,n]} (t) g, \; 
t\in\bbR, 
\end{equation} 
(we recall that $\chi_{\cM}$ denotes the characteristic function of a subset 
$\cM \subset \bbR$). Since by hypothesis, 
$\ol{\dom(\boldsymbol{\cT})} = L^2(\bbR; \cH)$, 
there exists a sequence 
$\big\{\wti h_j\big\}_{j\in\bbN} \subset \dom(\boldsymbol{\cT})$ such that 
\begin{equation}  
\big\|\wti g_n - \wti h_j\big\|_{L^2(\bbR; \cH)}^2 
= \int_{\bbR} dt \, \big\|\wti g_n(t) - \wti h_j(t)\big\|_{\cH}^2 
\underset{j\to\infty}{\longrightarrow} 0. 
\end{equation}
Consequently, there exists a subsequence $\big\{\wti h_{j_{k}}\big\}_{k\in\bbN}$ 
of $\big\{\wti h_j\big\}_{j\in\bbN}$ such that for a.e.\ $t\in\bbR$, 
\begin{equation}
\big\|\wti g_n(t) - \wti h_{j_{k}}(t)\big\|_{\cH} \underset{k\to\infty}{\longrightarrow} 0, 
\end{equation}
and hence for a.e.\ $t\in [-n,n]$, 
\begin{equation}
\big\|g - \wti h_{j_{k}}(t)\big\|_{\cH} \underset{k\to\infty}{\longrightarrow} 0.  
\end{equation}
As a result, for a.e.\ $t\in\bbR$, 
\begin{align}
\begin{split} 
& \chi_{[-n,n]} (t) \big(T(t) \wti h_{j_{k}}(t), f(t)\big)_{\cH} 
= \chi_{[-n,n]} (t) \big(\wti h_{j_{k}}(t), T(t) ^* f(t)\big)_{\cH}   \\
& \quad \underset{k\to\infty}{\longrightarrow} \chi_{[-n,n]} (t) (h, T(t) ^* f(t))_{\cH}. 
\end{split} 
\end{align}
Since $h\in\cH$ was arbitrary, this yields that 
$\{\chi_{[-n,n]} (t)T(t)^* f(t)\}_{t\in\bbR}$ is weakly measurable in $\cH$. Since also 
$n\in\bbN$ was arbitrary, one concludes that $\{T(t)^* f(t)\}_{t\in\bbR}$ is weakly measurable in $\cH$, which yields the weak measurability of $\{T(t)^*\}_{t\in \bbR}$. 
\end{proof}

\begin{remark}\lb{r4.5}
Lemma \ref{l4.5} shows that if $\{T(t)^*\}_{t\in \bbR}$ is not weakly measurable, 
then $\boldsymbol{\cT}$ is not densely defined. In other words, Example \ref{e4.2}
also generates an example of a non-densely defined operator $\boldsymbol{\cT}$ 
in $L^2(\bbR; \cH)$.
\end{remark}

The following result further clarifies the issue of density of the domain of 
$\boldsymbol{\cT}$.

\begin{lemma} \lb{l4.6} 
Let $\{T(t)\}_{t\in \bbR}$ be a
weakly measurable family of densely defined, closed operators in
$\cH$ and assume that the linear operator $\boldsymbol{\cT}$ on
$L^2(\bbR;\cH)$ is  defined as  in \eqref{A.17}.  If
\begin{equation} \lb{m}
\cD = \big\{f \in L^2(\bbR;\cH) \,\big|\, f(t)\in \dom(T(t)) \, \text{for a.e.} \, t\in\bbR\big\},
\end{equation}
then
\begin{equation}
\ol{\cD}=\ol{\dom(\boldsymbol{\cT})}.
\end{equation} 
\end{lemma}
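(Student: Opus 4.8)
The plan is to prove the two inclusions $\ol{\dom(\boldsymbol{\cT})} \subseteq \ol{\cD}$ and $\ol{\cD} \subseteq \ol{\dom(\boldsymbol{\cT})}$ separately. The first is immediate: comparing the definition \eqref{A.17} of $\boldsymbol{\cT}$ with \eqref{m}, one sees that $\dom(\boldsymbol{\cT}) \subseteq \cD$, since membership in $\dom(\boldsymbol{\cT})$ imposes the two additional requirements that $t \mapsto T(t) f(t)$ be weakly measurable and that $\int_{\bbR} \|T(t) f(t)\|_{\cH}^2 \, dt < \infty$. Taking closures yields $\ol{\dom(\boldsymbol{\cT})} \subseteq \ol{\cD}$.

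For the nontrivial inclusion I would fix $f \in \cD$ and show $f \in \ol{\dom(\boldsymbol{\cT})}$ by exhibiting an approximating sequence drawn from $\dom(\boldsymbol{\cT})$. After modifying $f$ on a set of Lebesgue measure zero (which does not alter its equivalence class in $L^2(\bbR;\cH)$), I may assume $f(t) \in \dom(T(t))$ for all $t \in \bbR$. The key observation---and the point at which the hypothesis is used in full---is that the measurability of the integrand is then automatic: since $f \in L^2(\bbR;\cH)$ the family $\{f(t)\}_{t\in\bbR}$ is weakly measurable, and since $\{T(t)\}_{t\in\bbR}$ is a weakly measurable family with $f(t) \in \dom(T(t))$ for all $t$, Definition \ref{dA.1}\,$(ii)$ guarantees that $\{T(t) f(t)\}_{t\in\bbR}$ is weakly (hence strongly) measurable. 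In particular $t \mapsto \|T(t) f(t)\|_{\cH}$ is a measurable function, finite for a.e.\ $t\in\bbR$, so the only obstruction to $f$ lying in $\dom(\boldsymbol{\cT})$ is the possible failure of square integrability of $\|T(\cdot) f(\cdot)\|_{\cH}$.

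To remove this obstruction I would truncate. I set $\cM_n = \{t \in \bbR \,|\, |t| \leq n, \ \|T(t) f(t)\|_{\cH} \leq n\}$, which is measurable by the previous paragraph, and define $g_n = \chi_{\cM_n} f$. Then $g_n(t)$ equals either $f(t) \in \dom(T(t))$ or $0 \in \dom(T(t))$, so $g_n \in \cD$; moreover $T(t) g_n(t) = \chi_{\cM_n}(t) T(t) f(t)$ is weakly measurable, and $\int_{\bbR} \|T(t) g_n(t)\|_{\cH}^2 \, dt = \int_{\cM_n} \|T(t) f(t)\|_{\cH}^2 \, dt \leq 2 n^3 < \infty$, so in fact $g_n \in \dom(\boldsymbol{\cT})$. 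Since $\{\cM_n\}_{n\in\bbN}$ is increasing with $\bigcup_n \cM_n = \bbR$ up to a null set, dominated convergence (with dominating function $\|f(\cdot)\|_{\cH}^2 \in L^1(\bbR)$) gives $\|f - g_n\|_{L^2(\bbR;\cH)}^2 = \int_{\bbR \setminus \cM_n} \|f(t)\|_{\cH}^2 \, dt \to 0$ as $n \to \infty$. Hence $f \in \ol{\dom(\boldsymbol{\cT})}$, and since $f \in \cD$ was arbitrary, $\ol{\cD} \subseteq \ol{\dom(\boldsymbol{\cT})}$.

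I expect the only genuine subtlety to be the measurability of $t \mapsto T(t) f(t)$, which at first sight looks as though it might require the stronger notion of $N$-measurability of $\{T(t)\}_{t\in\bbR}$; the resolution is precisely that weak measurability of the \emph{family} $\{T(t)\}_{t\in\bbR}$---applied to the weakly measurable family $\{f(t)\}_{t\in\bbR}$---already supplies it, after which the argument is a routine truncation. One should take minor care that the a.e.\ versus everywhere formulation of Definition \ref{dA.1}\,$(ii)$ causes no difficulty, which is handled by the initial modification of $f$ on a null set.
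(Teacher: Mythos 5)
Your proposal is correct and follows essentially the same route as the paper's own proof: both reduce to showing $\cD \subseteq \ol{\dom(\boldsymbol{\cT})}$, invoke weak measurability of the family $\{T(t)\}_{t\in\bbR}$ to conclude that $t \mapsto \|T(t)f(t)\|_{\cH}$ is measurable, and then truncate over the measurable sets $\{t \,|\, \|T(t)f(t)\|_{\cH} \leq n\} \cap [-n,n]$ to produce approximants in $\dom(\boldsymbol{\cT})$ (with the same bound $2n^3$). Your explicit treatment of the a.e.-versus-everywhere point via modification of $f$ on a null set is a minor refinement the paper handles implicitly by its standing convention, and changes nothing of substance.
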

\begin{proof}
Clearly $\dom(\boldsymbol{\cT})\subseteq \cD$, hence it suffices to prove that
\begin{equation}
\cD \subseteq \ol{\dom (\boldsymbol{\cT})}.
\end{equation} 

If $f\in \cD$, then the family  $\{T(t)f(t)\}_{t \in\bbR}$ is measurable, hence the 
function $\bbR\ni t \mapsto \|T(t)f(t)\|_{\cH}$ is measurable too. The latter fact 
implies that the sets
\begin{equation}
E_n = \{t\in\bbR \,|\, \|T(t)f(t)\|_{\cH} \leq n\} \cap [-n,n], \quad n\in \bbN,
\end{equation} 
are measurable. In addition, one notes that $E_n\subseteq E_{n+1}, n \in \bbN$, and
$\bbR=\bigcup_{n\in\bbN}\,E_n$. Define
\begin{equation}
f_n(t) = \begin{cases} f(t), & t\in E_n,\\
0, & t\in\bbR \backslash E_n, 
\end{cases}
 \lb{by}
\end{equation}
then $f_n\in \cD$, $n\in \bbN$, and
\begin{equation}
\|f-f_n\|_{L^2(\bbR;\cH)}^2 = \int_{\bbR \backslash 
E_n}\,\|f(t)\|_{\cH}^2\,dt \underset{n\to\infty}{\longrightarrow} 0.
\end{equation} 
Moreover, $f_n\in \dom(\boldsymbol{\cT})$ since
\begin{equation}
\int_{\bbR} \|T(t)f_n(t)\|_{\cH}^2\,dt
=\int_{E_n}\,\|T(t)f(t)\|_{\cH}^2\,dt\leq 2n^3.
\end{equation} 
Thus $\cD \subseteq \ol{\dom(\boldsymbol{\cT})}$. 
\end{proof}

\begin{corollary} \lb{c4.7}
Let $\{T(t)\}_{t\in \bbR}$ be a
weakly measurable family of densely defined, closed operators in
$\cH$ and assume that the linear operator $\boldsymbol{\cT}$ on
$L^2(\bbR;\cH)$ is  defined as  in \eqref{A.17}. Then $\boldsymbol{\cT}$ is densely
defined if the linear subspace $\cD$ of  $L^2(\bbR;\cH)$
given by \eqref{m} is dense in $L^2(\bbR;\cH)$. 
\end{corollary}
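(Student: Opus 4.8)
The plan is to deduce this immediately from Lemma \ref{l4.6}, which has already established the key identity $\ol{\cD} = \ol{\dom(\boldsymbol{\cT})}$. Indeed, the hypothesis of the corollary is precisely that $\cD$ is dense in $L^2(\bbR;\cH)$, i.e., that $\ol{\cD} = L^2(\bbR;\cH)$. Combining this with Lemma \ref{l4.6} gives
\begin{equation*}
\ol{\dom(\boldsymbol{\cT})} = \ol{\cD} = L^2(\bbR;\cH),
\end{equation*}
so that $\boldsymbol{\cT}$ is densely defined, as claimed.

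The substance of the argument is entirely contained in Lemma \ref{l4.6}, and no further work is required for the corollary beyond this one-line chain of equalities. It is worth recalling where the content of that lemma lies: the inclusion $\dom(\boldsymbol{\cT}) \subseteq \cD$ (and hence $\ol{\dom(\boldsymbol{\cT})} \subseteq \ol{\cD}$) is immediate, since any $f \in \dom(\boldsymbol{\cT})$ satisfies $f(t) \in \dom(T(t))$ for a.e.\ $t\in\bbR$ by definition \eqref{A.17}. The nontrivial reverse inclusion $\cD \subseteq \ol{\dom(\boldsymbol{\cT})}$ is what the truncation argument in Lemma \ref{l4.6} supplies: given $f \in \cD$, weak measurability of the family $\{T(t)\}_{t\in\bbR}$ forces $t \mapsto \|T(t)f(t)\|_{\cH}$ to be measurable, so one may excise the exceptional set where this quantity is large (and restrict to $[-n,n]$) to produce approximants $f_n \in \dom(\boldsymbol{\cT})$ converging to $f$ in $L^2(\bbR;\cH)$.

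Consequently there is no genuine obstacle here; the only thing to be careful about is simply not to re-prove Lemma \ref{l4.6}, but to cite it and read off the conclusion. The corollary is best viewed as an isolation of the practically useful criterion for dense definedness: one need only verify that the ``algebraic'' domain $\cD$ (requiring merely $f(t)\in\dom(T(t))$ a.e., with no integrability condition on $T(t)f(t)$) is dense, which in many situations is considerably easier to check than density of $\dom(\boldsymbol{\cT})$ itself.
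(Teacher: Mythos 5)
Your proposal is correct and coincides with the paper's own (implicit) argument: the corollary is stated immediately after Lemma \ref{l4.6} precisely because it follows from the identity $\ol{\cD}=\ol{\dom(\boldsymbol{\cT})}$ in one line, exactly as you write. Your additional recap of where the substance of Lemma \ref{l4.6} lies is accurate but not needed for the corollary itself.
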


Strengthening Example \ref{e4.1}, we now provide an example of
operators $\{T(t)\}_{t\in \bbR}$ such that \eqref{4.1} is violated and,
in adddition, $T(t)$ are self-adjoint in $\cH$ for all $t\in \bbR$:

\begin{example}\lb{e4.8} Assume that $T_0$ is a densely defined, closed, symmetric
operator with deficiency indices $(1,1)$ in $\cH$, and let $T_1$, $T_2$, 
$T_1 \neq T_2$, be two different self-adjoint extensions of $T_0$ in $\cH$. 
Suppose that $\gE\subset \bbR$ is a nonmeasurable subset of $\bbR$
$($in the sense of Lebesgue measure$)$. Let $\big\{\wti T(t)\big\}_{t\in\bbR}$ be the 
family of closed, densely defined  operators in $\cH$ given by
\begin{equation}
\wti T(t) = \begin{cases} T_1, & t \in\gE,  \\
T_2, & t \in \bbR\backslash\gE. 
\end{cases}      \lb{defnT1}
\end{equation}
Then the family  $\big\{\wti T(t)\big\}_{t\in \in\bbR}$ of
self-adjoint operators  is weakly measurable, but not
$N$-measurable.
\end{example}
\begin{proof}
We argue as in Example \ref{e4.1}. Consider 
a (weakly) measurable family of elements $\{f(t)\}_{t\in\bbR}$ in
$\cH$ such that $f(t) \in \dom\big(\wti T(t)\big)$ for all
$t\in\bbR$.

Then for any $g\in \dom (T_0)$ one has
\begin{equation}
\big(g, \wti T(t) f(t)\big)_{\cH} = (T_0 g, f(t))_{\cH}, \quad t \in\bbR,
\end{equation} 
and hence $\bbR \ni t \mapsto \big(g, \wti T(t)f(t)\big)_{\cH}$ is measurable. 
Since $\dom(T_0)$ is dense in $\cH$,  Remark \ref{rA.2}\,$(i)$ implies weak measurability of $\big\{\wti T(t)\big\}_{t\in\bbR}$.

On the other hand, as in Example \ref{e4.1},  there exist elements
$h,k\in \cH$ such that
\begin{equation}
\alpha_1 = \big(h, \big(T_1^2+I\big)^{-1}k\big)_{\cH} 
\neq \big(h, \big(T_2^2+I\big)^{-1}k\big)_{\cH} = \alpha_2.
\end{equation} 
Then the function
\begin{equation}
\big(h, \big(\big(\wti T(t)\big)^*\wti T(t) + I_{\cH}\big)^{-1}k\big)_{\cH} 
= \begin{cases} \alpha_1, & t \in\gE,  \\
\alpha_2, & t \in \bbR\backslash\gE,
\end{cases}      \lb{defnT2}
\end{equation}
is nonmeasurable. Thus, the family $\big\{\wti T(t)\big\}_{t\in\bbR}$ is not 
$N$-measurable.
\end{proof}

We conclude with a positive result characterizing
$N$-measurability in terms of resolvents (cf.\ also 
Theorem \ref{tA.7}\,$(i)$ and $(ii)$):

\begin{theorem}  \lb{t4.9} Let $\{T(t)\}_{t\in \bbR}$ be a
family of  densely defined, closed operators in $\cH$. Suppose that
there exists a measurable function $\bbR \ni t \mapsto \alpha(t) \in \bbC$ such that
\begin{equation}
\alpha(t)\in \rho(T(t)) \, \text{ for a.e.\ } t\in\bbR,
\end{equation} 
and, in addition, the family of bounded operators
$\big\{(T(t)-\alpha(t)I_{\cH})^{-1}\big\}_{t\in \bbR}$ is weakly measurable.
Then the family $\{T(t)\}_{t\in \bbR}$ is $N$-measurable.
\end{theorem}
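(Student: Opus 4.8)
The plan is to realize each entry of the characteristic matrix of $T(t)$ explicitly as a weakly measurable function of the single resolvent $R(t):=(T(t)-\alpha(t)I_{\cH})^{-1}$, by exhibiting the orthogonal projection onto $\Gamma(T(t))$ through a bounded parametrization of the graph. The whole argument uses $R(t)$ only at the one point $\alpha(t)$, which is exactly what the hypothesis supplies, so no variation of the spectral parameter is ever needed.

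First I would record two elementary reductions. Since $\{R(t)\}_{t\in\bbR}$ is a weakly measurable family of bounded operators, so is the adjoint family $R(t)^*=(T(t)^*-\ol{\alpha(t)}I_{\cH})^{-1}$, because $(g,R(t)^*h)_{\cH}=\ol{(h,R(t)g)_{\cH}}$ is measurable for all $g,h\in\cH$. Moreover, as already used in the proof of Theorem \ref{tA.7}, finite sums and products of weakly measurable families of bounded operators are again weakly measurable: for bounded weakly measurable $B(t),C(t)$ one has $(g,B(t)C(t)h)_{\cH}=(B(t)^*g,C(t)h)_{\cH}$, whose right-hand side is measurable by \eqref{A.14aa} since $t\mapsto B(t)^*g$ and $t\mapsto C(t)h$ are strongly measurable. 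Hence every family built from $R(t)$, $R(t)^*$, and the scalar $\alpha(t)$ by sums and products is weakly measurable.

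Next I would parametrize the graph. For a.e.\ $t$, writing $f=R(t)g$ gives $g=(T(t)-\alpha(t)I_{\cH})f$ and therefore $T(t)f=(I_{\cH}+\alpha(t)R(t))g$, so the bounded operator $V(t)\colon\cH\to\cH\oplus\cH$ defined by $V(t)g=\langle R(t)g,(I_{\cH}+\alpha(t)R(t))g\rangle$ satisfies $\ran(V(t))=\Gamma(T(t))$. As $T(t)$ is closed, $\Gamma(T(t))$ is closed, and since $R(t)$ is injective so is $V(t)$; thus $V(t)$ is injective with closed range, hence bounded below, and $S(t):=V(t)^*V(t)=R(t)^*R(t)+(I_{\cH}+\ol{\alpha(t)}R(t)^*)(I_{\cH}+\alpha(t)R(t))$ is a self-adjoint, strictly positive, boundedly invertible operator on $\cH$. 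The orthogonal projection onto $\ran(V(t))=\Gamma(T(t))$ is then $P(\Gamma(T(t)))=V(t)S(t)^{-1}V(t)^*$, whose $2\times2$ block entries are finite sums of products of the families $R(t)$, $R(t)^*$, $S(t)^{-1}$, and $\alpha(t)I_{\cH}$. By the reduction of the previous paragraph it therefore suffices to prove that $\{S(t)^{-1}\}_{t\in\bbR}$ is weakly measurable, whereupon each $P(\Gamma(T(t)))_{j,k}$ is weakly measurable, i.e.\ $\{T(t)\}_{t\in\bbR}$ is $N$-measurable.

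The main obstacle is precisely this inversion step, since weak measurability need not survive pointwise inversion when the relevant bounds depend on $t$. I would resolve it by a measurable exhaustion. Fix a countable dense subset $D$ of the unit sphere of $\cH$; then $t\mapsto\norm{S(t)}=\sup_{h\in D}(h,S(t)h)_{\cH}$ and $t\mapsto\inf_{h\in D}(h,S(t)h)_{\cH}$ are measurable, so the sets $\Omega_{m,n}=\{t\in\bbR\mid \norm{S(t)}\le m,\ (h,S(t)h)_{\cH}\ge 1/n\ \text{for all}\ h\in D\}$ are measurable and cover a co-null subset of $\bbR$, with $\sigma(S(t))\subset[1/n,m]$ for $t\in\Omega_{m,n}$. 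For each pair $(m,n)$ choose polynomials $p_k$ with $p_k(\lambda)\to\lambda^{-1}$ uniformly on $[1/n,m]$; by the spectral theorem $p_k(S(t))\to S(t)^{-1}$ in operator norm for every $t\in\Omega_{m,n}$. Each $p_k(S(t))$ is weakly measurable (a polynomial in the weakly measurable $S(t)$), and pointwise limits of measurable scalar functions are measurable, so $\{S(t)^{-1}\}$ is weakly measurable on each $\Omega_{m,n}$; patching over $(m,n)\in\bbN^2$ gives weak measurability on $\bbR$, completing the argument.
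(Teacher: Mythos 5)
Your proposal is correct, but it follows a genuinely different route from the paper's proof. The paper argues in three short steps, each leaning on cited machinery: weak measurability of the bounded family $\big\{(T(t)-\alpha(t)I_{\cH})^{-1}\big\}_{t\in\bbR}$ gives its $N$-measurability by \eqref{A.14c}; Theorem \ref{tA.6}\,$(iii)$ (inverses of injective $N$-measurable families are $N$-measurable) then yields $N$-measurability of $\{T(t)-\alpha(t)I_{\cH}\}_{t\in\bbR}$; and Lennon's Theorem \ref{lA.7}\,$(i)$ on sums of decomposable operators adds back $\alpha(t)I_{\cH}$ to conclude that $T(t)=\ol{[T(t)-\alpha(t)I_{\cH}]+\alpha(t)I_{\cH}}$ is $N$-measurable. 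You never invoke direct integrals at all: instead you build the characteristic matrix by hand, via the graph parametrization $V(t)g=\langle R(t)g,(I_{\cH}+\alpha(t)R(t))g\rangle$ with $\ran(V(t))=\Gamma(T(t))$, the projection formula $P(\Gamma(T(t)))=V(t)\big(V(t)^*V(t)\big)^{-1}V(t)^*$, and a measurable-exhaustion plus polynomial-approximation argument to show that $\big\{S(t)^{-1}\big\}_{t\in\bbR}=\big\{(V(t)^*V(t))^{-1}\big\}_{t\in\bbR}$ is weakly measurable; all of these steps check out (in particular, the identification of $\ran(V(t))$ with the graph, the boundedness below of $V(t)$ from injectivity plus closed range, and the patching over the sets $\Omega_{m,n}$ using completeness of Lebesgue measure). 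What each approach buys: the paper's proof is far shorter, but only modulo Nussbaum's and Lennon's theorems, the latter being a substantial result; your proof is self-contained, using nothing beyond the closure of weak measurability under adjoints, sums, products, and pointwise limits, together with the spectral theorem for bounded self-adjoint operators, and it has the side benefit of producing explicit formulas for the entries $P(\Gamma(T(t)))_{j,k}$ in terms of the single resolvent $R(t)$ (equivalently, via \eqref{A.14}, expressions for $(T(t)^*T(t)+I_{\cH})^{-1}$ and its companions in terms of $(T(t)-\alpha(t)I_{\cH})^{-1}$). The inversion step is indeed the one genuinely delicate point—weak measurability is not in general preserved by pointwise inversion when the invertibility constants vary with $t$—and your $\Omega_{m,n}$ exhaustion is exactly the right device; that sub-lemma (weak measurability of $\{S(t)^{-1}\}$ for a weakly measurable, a.e.\ boundedly invertible, positive family) is a useful statement in its own right.
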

\begin{proof}
Since $(T(t) - \alpha(t) I_{\cH})^{-1} \in \cB(\cH)$, $t\in\bbR$, weak measurability 
of the family $\big\{(T(t) - \alpha(t) I_{\cH})^{-1} \big\}_{t\in\bbR}$ is equivalent 
to its $N$-measurability by \eqref{A.14c}. By Theorem \ref{tA.6}\,$(iii)$, the 
family $\{T(t)-\alpha(t)I_{\cH}\}_{t\in\bbR}$ is $N$-measurable. Since 
$\alpha(t) I_{\cH} \in \cB(\cH)$ and $T(t)$ is closed for all $t\in\bbR$, 
Theorem \ref{lA.7}\,$(i)$ implies that 
$[T(t)-\alpha(t)I_{\cH}] + \alpha(t) I_{\cH}$ is densely defined and closable for 
a.e.\ $t\in\bbR$, and additionally,   
\begin{equation}
\ol{[T(t)-\alpha(t)I_{\cH}] + \alpha(t) I_{\cH}} = \ol{T(t)} = T(t) \, 
\text{ is $N$-measurable.}
\end{equation} 
\end{proof}

\medskip

\noindent {\bf Acknowledgments.}
We are indebted to Vladimir Chilin, Ben de Pagter, Nigel Kalton, and 
Konstantin Makarov for helpful discussions. 
In particular, Nigel Kalton suggested looking for examples of the type 
Example \ref{e4.8} with self-adjoint fiber operators $\wti T (t)$, $t\in\bbR$. 

\medskip


\end{document}